\newtheorem{theorem}{Theorem}[section]
\newtheorem{lemma}[theorem]{Lemma}
\newtheorem{prop}[theorem]{Proposition}
\newtheorem*{theorem*}{Theorem}
\theoremstyle{definition}
\newtheorem{definition}{Definition}[section]
\theoremstyle{remark}
\numberwithin{equation}{section}
\author{Ioana-Claudia Laz\u{a}r}
\address{
Dept. of Mathematics\\
Politehnica University of Timi\c{s}oara\\
Victoriei Square $2$, $300006$-Timi\c{s}oara, Romania}
\email{ioana.lazar@upt.ro}
\keywords{$8$-location, $SD'$ property, Gromov hyperbolicity}
\subjclass[2010]{Primary 20F67, Secondary 05C99}
\begin{document}

\title[A combinatorial negative curvature condition]{A combinatorial negative curvature condition implying Gromov hyperbolicity}

\begin{abstract}
We explore a local combinatorial condition on a simplicial complex, called $8$-location. We show that $8$-location and simple connectivity imply Gromov hyperbolicity.
\end{abstract}

\maketitle

\section{Introduction}

Curvature can be expressed both in metric and combinatorial terms. Metrically, one can refer to ’nonpositively
curved’ (respectively, ’negatively curved’) metric spaces in the sense of Aleksandrov, i.e. by comparing small triangles in the space with
triangles in the Euclidean plane (hyperbolic plane). These are the CAT(0) (respectively, CAT(-1)) spaces. Combinatorially, one looks for local combinatorial conditions implying
some global features typical for nonpositively curved metric spaces.

A very important combinatorial condition of this type was formulated by Gromov \cite{Gro} for cubical complexes, i.e. cellular complexes
with cells being cubes. Namely, simply connected cubical complexes with links (that can be thought as small spheres around vertices)
being flag (respectively, $5$-large, i.e.\ flag-no-square) simplicial complexes carry a canonical CAT(0) (respectively, CAT(-1)) metric.
Another important local combinatorial condition is local $k$-largeness, introduced by Januszkiewicz-{\' S}wi{\c a}tkowski \cite{JS1} and Haglund \cite{Hag}. A flag simplicial complex is \emph{locally $k$-large} if its links do not contain `essential' loops of length less than $k$.
In particular, simply connected locally $7$-large simplicial complexes, i.e.\ \emph{$7$-systolic} complexes, are Gromov hyperbolic \cite{JS1}.
The theory of \emph{$7$-systolic groups}, that is groups acting geometrically on $7$-systolic complexes, allowed to provide important examples of highly dimensional Gromov hyperbolic groups \cite{JS0,JS1,O-chcg,OS}.

However, for groups acting geometrically on CAT(-1) cubical complexes or on $7$-systolic complexes, some very restrictive limitations
are known. For example, $7$-systolic groups are in a sense `asymptotically hereditarily aspherical', i.e.\ asymptotically they can not contain
essential spheres. This yields in particular that such groups are not fundamental groups of negatively curved manifolds of dimension
above two; see e.g. \cite{JS2,O-ci,O-ib,OS,Gom,O-ns}.
This rises need for other combinatorial conditions, not imposing restrictions as above. In \cite{O-sdn,ChOs,BCCGO,ChaCHO} some conditions
of this type are studied -- they form a way of unifying CAT(0) cubical and systolic theories.
On the other hand, Osajda \cite{O-8loc} introduced a local combinatorial condition of \emph{$8$-location}, and used it to provide a new solution to Thurston's
problem about hyperbolicity of some $3$-manifolds.

In the current paper we undertake a systematic study of a version of $8$-location, suggested in \cite[Subsection 5.1]{O-8loc}.
This version is in a sense more natural than the original one (tailored to Thurston's problem), and neither of them is implied by the other.
However, in the new $8$-location we do allow essential $4$-loops. This suggests that it can be used in a much wider context.
 Roughly (see Section $2$ for the precise definition), the new $8$--location says that essential loops of length at most $8$ admit filling diagrams with one internal vertex.

We show that this local combinatorial condition is a negative-curvature-type condition, by proving the following main result of the paper.

\begin{theorem}
Let $X$ be a simply connected, $8$-located simplicial complex. Then the $1$-skeleton of $X$, equipped
with the standard path metric, is Gromov hyperbolic.
\end{theorem}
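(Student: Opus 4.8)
The standard route to Gromov hyperbolicity from a combinatorial local condition is through a linear isoperimetric inequality, or equivalently through a thin-triangles / thin-bigons argument on geodesics in the universal cover. Since $X$ is already assumed simply connected, $X$ is its own universal cover, and the plan is to work directly with geodesics in the $1$--skeleton $X^{(1)}$ equipped with the path metric.

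**The plan in outline:**

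First I would establish the basic structural consequences of $8$--location for the simplicial complex $X$: in particular I expect that $8$--location implies $X$ is $SD'$ (the property advertised in the keywords), and that it implies a suitable simple-connectivity-of-links or local-combinatorial-convexity statement that lets one run a Cartan--Hadamard-type argument locally. The key technical device I would build is a \emph{minimal disc diagram} lemma: any closed loop $\gamma$ in $X^{(1)}$ bounds a simplicial disc $D \to X$ that is minimal with respect to area (number of triangles), and I would extract combinatorial constraints on the internal structure of such a minimal $D$ from the $8$--location hypothesis — crucially, that short essential loops in $X$ fill with at most one internal vertex. The heart of the argument is then a local-to-global \emph{geodesic combing} or \emph{projection} argument: given a geodesic triangle (or geodesic bigon) in $X^{(1)}$, fill it with a minimal diagram and analyze the combinatorial geometry of that diagram. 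The $8$--location condition should force that no long ``thin strips'' can appear between two sides, because any such strip would contain an essential loop of length $\le 8$ that must fill with one internal vertex, contradicting minimality or geodesy after a local surgery on the diagram.

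**Executing the key steps:**

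Concretely, I would (1) prove a \emph{local convexity} statement: balls of small radius, or links, in $X$ are convex or at least simply connected in the appropriate combinatorial sense, deducing this from $8$--location by a loop-shortening argument that replaces a short loop by its $\le 1$-internal-vertex filling; (2) prove that geodesics in $X^{(1)}$ are \emph{locally unique up to bounded perturbation}, i.e.\ two geodesics with the same endpoints that fellow-travel a bit must stay uniformly close, using minimal diagrams between them and showing the diagram cannot be ``wide''; (3) upgrade local thinness of bigons/triangles to global thinness by the standard chain argument (subdividing a long geodesic and iterating the local estimate), which yields a uniform thinness constant $\delta$; (4) conclude Gromov hyperbolicity via the thin-triangles (or equivalently $\delta$-slim, or Rips) characterization. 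Throughout I would rely on the minimal-diagram machinery: pick a diagram $D\to X$ of minimal area spanning the triangle, observe its boundary decomposes into the three geodesic sides, and use $8$--location to control the link of each internal vertex and the combinatorial curvature, ultimately running a combinatorial Gauss--Bonnet count to bound area linearly in perimeter.

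**Main obstacle:**

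The hard part will be step (2)--(3): extracting from a purely \emph{local} condition (on loops of length $\le 8$) a \emph{uniform global} bound on the width of minimal diagrams between fellow-traveling geodesics. The subtlety is that $8$--location permits essential $4$--loops, so one cannot simply invoke the systolic/$CAT(0)$-cubical projection lemmas; one must carefully track how a $4$--loop in a link can or cannot be closed up, and show that the presence of the single allowed internal vertex in length-$\le 8$ fillings still rigidly constrains how two geodesics can diverge and reconverge. I expect the proof to hinge on a careful case analysis of the possible local pictures in a minimal diagram near its boundary geodesics, showing that a ``bad'' configuration (a long thin bigon, or a flat strip) always produces a short essential loop whose forced filling contradicts either minimality of the diagram or the geodesic property of a side, allowing a length-decreasing or area-decreasing surgery.
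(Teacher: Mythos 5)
Your proposal is a plan, not a proof, and its central technical device has a genuine gap. You propose to fill geodesic triangles with minimal disc diagrams and then ``use $8$--location to control the link of each internal vertex and the combinatorial curvature, ultimately running a combinatorial Gauss--Bonnet count.'' But $8$--location is a condition on homotopically trivial loops of length at most $8$ \emph{in $X$}, not a condition on links, and it does not translate into any lower bound on the degree of an internal vertex of a reduced diagram. This is precisely what makes it unlike $7$--systolicity: there the link condition (no short full cycles in links) immediately forces internal vertices of reduced diagrams to have degree $\ge 7$, which is what feeds Gauss--Bonnet. Here, as you yourself observe, essential $4$--loops are allowed, so internal vertices of small degree can occur and there is no combinatorial curvature bound of the usual form. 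You flag this as ``the hard part'' but offer no mechanism to get past it; the surgery you gesture at (``a short essential loop whose forced filling contradicts minimality'') is exactly the step that needs to be made precise and is not.

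The paper does something structurally different and does not use disc diagrams or Gauss--Bonnet at all. It introduces a global property $SD'$ (a vertex condition (V) and triangle condition (T) on spheres $S_i(O)$, roughly a combinatorial descent/retraction property for balls), proves in Theorem~\ref{3.2} that $8$--location together with $SD'$ makes intervals between any two vertices uniformly $2$--thin by a direct case analysis (repeatedly applying (V) and (T) to produce short homotopically trivial loops and then invoking $8$--location to cone them off, reaching distance contradictions), and concludes hyperbolicity via Papasoglu's thin-intervals criterion. It then proves in Theorem~\ref{4.1} that a simply connected $8$--located complex satisfies $SD'$ by constructing the universal cover as an increasing union of balls $\widetilde B_i$ with a careful equivalence relation on new vertices, verifying $(P_i),(Q_i),(R_i)$ inductively. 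Your passing remark that ``$8$--location implies $X$ is $SD'$'' is the right instinct, and that is in fact the paper's Theorem~\ref{4.1}, but you then drop it in favor of a curvature-counting route that is not supported by the hypothesis. To make your approach viable you would need to first establish, independently, a local degree bound for vertices of reduced diagrams over $8$--located complexes, and no such bound follows from $8$--location alone.
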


The above theorem was announced without a proof in \cite[Subsection 5.1]{O-8loc}. In \cite[Subsection 5.2]{O-8loc} applications concerning
some weakly systolic complexes and groups are mentioned.

Our proof consists of two steps. In Theorem \ref{3.7} we show that an $8$-located simplicial complex satisfying a global condition $SD'$ (see Definition \ref{def-2.2}) is Gromov hyperbolic.
Then, in Theorem \ref{4.1} we show that the universal cover of an $8$-located complex satisfies the $SD'$ property. The main Theorem \ref{4.2} follows immediately from those two results.
For proving Theorem \ref{4.1} we use a method of constructing the universal cover introduced in \cite{O-sdn}, and then developed in \cite{BCCGO,ChaCHO}.

It is interesting to note that $8$-location is not a property preserved by taking covers. So it is possible to build an $8$-located complex which has an universal cover that is not $8$-located and hence it is not necessarily Gromov hyperbolic. This is in contrast with other curvature conditions mentioned above (CAT(0) metric, local $k$-largeness, the SD$_{2}^{\ast}$ property, bucolic complexes, weakly modular graphs). For such curvature conditions one can find results of the form: if a complex has some local curvature condition then its universal cover satisfies the same condition globally.

Unfortunately, at the moment we do not have examples to justify why $8$-location provides new examples of hyperbolic groups of higher dimension (i.e. not 'asymptotically hereditarily aspherical').

\section{Preliminaries}

Let $X$ be a simplicial complex. We denote by $X^{(k)}$ the $k$-skeleton of
$X, 0 \leq k < \dim X$. A subcomplex $L$ in $X$ is called \emph{full} as a subcomplex of $X$ if any simplex of $X$ spanned by a set of vertices in $L$, is a simplex of $L$. For a set
$A = \{ v_{1}, ..., v_{k} \}$ of vertices of $X$, by $\langle  A \rangle$ or by $\langle  v_{1}, ..., v_{k} \rangle$ we denote the \emph{span} of $A$, i.e. the
smallest full subcomplex of $X$ that
contains $A$. We write $v \sim v'$ if $\langle  v,v' \rangle \in X$ (it can happen that $v = v'$). We write $v \nsim v'$ if $\langle  v,v' \rangle \notin X$.
 We call $X$ {\it flag} if any finite set of vertices which are pairwise connected by
edges of $X$, spans a simplex of $X$.

A {\it cycle} ({\it loop}) $\gamma$ in $X$ is a subcomplex of $X$ isomorphic to a triangulation of $S^{1}$. A \emph{full cycle} in $X$ is a cycle that is full as a subcomplex of $X$.
A $k$-\emph{wheel} in $X$ $(v_{0}; v_{1}, ..., v_{k})$ (where $v_{i}, i \in \{0,..., k\}$
are vertices of $X$) is a subcomplex of $X$ such that $\gamma = (v_{1}, ..., v_{k})$ is a full cycle and $v_{0} \sim v_{1}, ..., v_{k}$.
The \emph{length} of $\gamma$ (denoted by $|\gamma|$) is the number of edges in $\gamma$. If $g = (v_{1}, ..., v_{k})$ is a $1$-skeleton geodesic of $X$, the \emph{length} of $g$ (denoted by $|g|$ or by $|(v_{1}, ..., v_{k})|$) is the number of edges in $g$.

We define the \emph{combinatorial metric} on the $0$-skeleton of $X$ as the number of edges in the shortest $1$-skeleton path joining two given vertices.
There are two distance functions we consider in this paper. The first is the combinatorial
distance between vertices in a simplicial complex, and the second is the length of geodesics
between points in a metric space. We will specify
which distance function we are using by writing $d_{c}(\cdot, \cdot)$, and $d(\cdot, \cdot)$, respectively.

A \emph{ball (sphere)}
$B_{i}(v,X)$ ($S_{i}(v,X)$) of radius $i$ around some vertex $v$ is a full subcomplex of $X$ spanned by vertices at combinatorial distance at most $i$ (at combinatorial distance $i$) from $v$.

\begin{definition}\label{def-2.1}
Let $O$ and $O'$ be two vertices of $X$ such that $d_{c}(O,O')= n$. Let $I$ be the set of vertices lying on geodesics between $O$ and $O'$. Let $I_{k} = S_{k}(O) \cap S_{n-k}(O') = S_{k}(O) \cap I$ for all $k \leq n$. We call $I$ an \emph{interval}. If for every $k \leq n$ and for every two vertices $v,w \in I_{k},$ we have $d_{c}(v,w) \leq j$, then we say $I$ is $j$\emph{-thin}.
\end{definition}

\begin{definition}\label{def-2.2}
A simplicial complex is $m$-\emph{located} if it is flag and every full homotopically trivial loop of length at most $m$ is contained in a $1$-ball.
\end{definition}

Let $\sigma$ be a simplex of $X$. The \emph{link} of $X$ at $\sigma$, denoted $X_{\sigma}$, is the subcomplex of $X$ consisting of all simplices of $X$ which are disjoint from $\sigma$ and which, together
with $\sigma$, span a simplex of $X$. We call a flag simplicial complex $k$\emph{-large} if there are no
full $j$-cycles in $X$, for $j<k$. We say $X$ is \emph{locally} $k$\emph{-large} if all its links are $k$-large.

We introduce further a global combinatorial condition on a flag simplicial complex.

\begin{definition}\label{def-2.3}
Let $X$ be a flag simplicial complex. For a vertex $O$ of $X$ and a natural number $n$, we say that $X$ satisfies \emph{the property $SD'_{n}(O)$}
if for every $i \in \{1, ..., n\}$ we have:
\begin{enumerate}
\item (T) (triangle condition): for every edge $e \in S_{i+1}(O)$, the intersection $X_{e} \cap B_{i}(O)$ is non-empty;
\item (V) (vertex condition): for every vertex $v \in S_{i+1}(O)$, and for every two vertices $u,w \in X_{v} \cap B_{i}(O)$,
there exists a vertex $t \in X_{v} \cap B_{i}(O)$ such that $t \sim u,w$. \end{enumerate}
We say $X$ satisfies \emph{the property $SD'(O)$} if $SD'_{n}(O)$ holds for each natural number $n$. We say $X$ satisfies \emph{the property $SD'$} if
$SD'_{n}(O)$ holds for each natural number $n$ and for each vertex $O$ of $X$.
\end{definition}

The following result is given in \cite{O-8loc}.

\begin{prop}\label{2.1}
A simplicial complex which satisfies the property $SD'(O)$ for some vertex $O$, is simply connected.
\end{prop}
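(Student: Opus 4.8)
The plan is to build an explicit combinatorial contraction of an arbitrary edge-loop in $X$ to the basepoint $O$, using the two clauses of $SD'(O)$ as the engine that pushes a loop one sphere closer to $O$ at each stage. First I would recall that it suffices to show every loop in the $1$-skeleton $X^{(1)}$ that is supported in some ball $B_n(O)$ is null-homotopic, and to argue by downward induction on the largest radius $i$ such that the loop meets $S_i(O)$. So fix a loop $\gamma$ and let $i$ be minimal with $\gamma\subseteq B_i(O)$; if $i\le 1$ we are done because $B_1(O)$ is a simplicial cone (it is the span of $O$ together with its link, hence contractible, being a full subcomplex star-shaped at $O$). The inductive step is to produce a loop $\gamma'\subseteq B_{i-1}(O)$ freely homotopic to $\gamma$ in $X$.

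For the inductive step I would proceed vertex by vertex and edge by edge along the portion of $\gamma$ lying on $S_i(O)$. Consider a maximal subpath $v_0,v_1,\dots,v_k$ of $\gamma$ with all $v_j\in S_i(O)$; its endpoints are adjacent (in $\gamma$) to vertices $u,u'\in B_{i-1}(O)$. The triangle condition (T), applied to each edge $\langle v_{j},v_{j+1}\rangle\subseteq S_i(O)$, yields a vertex $t_j\in X_{\langle v_j,v_{j+1}\rangle}\cap B_{i-1}(O)$, so each such edge spans a triangle with a vertex one step closer to $O$; this lets me homotope $\gamma$ across these triangles, replacing the edge $\langle v_j,v_{j+1}\rangle$ by the two edges $\langle v_j,t_j\rangle,\langle t_j,v_{j+1}\rangle$. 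At the vertex $v_j$ itself, where two consecutive triangles (with apexes $t_{j-1}$ and $t_j$, both in $X_{v_j}\cap B_{i-1}(O)$) meet, I invoke the vertex condition (V): there is $s\in X_{v_j}\cap B_{i-1}(O)$ adjacent to both $t_{j-1}$ and $t_j$, so the quadrilateral $t_{j-1},v_j,t_j,s$ (which has all its vertices in the star of $v_j$, hence bounds a disc in that cone) can be filled, replacing the path $t_{j-1},v_j,t_j$ by $t_{j-1},s,t_j$ and thereby removing $v_j$ from the loop. I also need to handle the two ends of the maximal $S_i$-subpath, where the neighbouring vertices $u,u'$ already lie in $B_{i-1}(O)$: there (V) applied at $v_0$ to the pair $u,t_0\in X_{v_0}\cap B_{i-1}(O)$ (and symmetrically at $v_k$) again supplies a vertex in $B_{i-1}(O)$ joining them, closing up the new loop inside $B_{i-1}(O)$. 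Doing this for every maximal $S_i$-subpath of $\gamma$ produces the desired $\gamma'\subseteq B_{i-1}(O)$, homotopic to $\gamma$.

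The main obstacle is bookkeeping rather than depth: one must check that each local replacement is genuinely a free homotopy in $X$, i.e.\ that each triangle or quadrilateral used as a "homotopy patch" actually lies in $X$. For the triangles this is immediate from flagness together with the adjacencies guaranteed by (T); for the quadrilaterals $t_{j-1},v_j,t_j,s$ one uses that all four vertices lie in the star $\overline{\{v_j\}}*X_{v_j}$, which is a full subcomplex that deformation-retracts to $v_j$, so any loop in it is null-homotopic in $X$. A secondary point is to make sure the rewriting terminates and does not reintroduce vertices of $S_i(O)$: since every replacement strictly decreases the number of vertices of $\gamma$ on $S_i(O)$ (each step deletes some $v_j$ and inserts only vertices of $B_{i-1}(O)$), after finitely many steps no vertex of the loop lies on $S_i(O)$, giving $\gamma'\subseteq B_{i-1}(O)$ and completing the induction; when $i$ reaches $1$ the loop sits in the contractible ball $B_1(O)$ and is therefore null-homotopic. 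Finally, since $X$ is a simplicial complex, $\pi_1$ is generated by such edge-loops, so $X$ is simply connected.
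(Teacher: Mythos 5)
The paper does not actually supply a proof of this proposition; it only cites \cite{O-8loc}, so there is no internal argument to compare yours against. That said, your proof is correct and is the standard radial-filling argument for $SD$-type conditions: apply (T) to each $S_i$-edge of the loop to obtain a triangle with an apex one sphere inward, then apply (V) at each $S_i$-vertex to fill the resulting quadrilateral and delete that vertex, giving a freely homotopic loop inside $B_{i-1}(O)$; downward induction on $i$, together with the fact that $B_1(O)$ is the closed star of $O$ (a cone, by flagness, which is built into Definition~\ref{def-2.2}), finishes the proof. Two minor points to make explicit in a polished write-up: for a maximal $S_i$-subpath consisting of a single vertex, (V) should be applied directly to its two $B_{i-1}(O)$-neighbours; and the cleanest justification that each quadrilateral $(t_{j-1},v_j,t_j,s)$ bounds in $X$ is simply that the two triangles $\langle t_{j-1},v_j,s\rangle$ and $\langle v_j,t_j,s\rangle$ are simplices by flagness, which avoids having to verify that the closed star of $v_j$ is a full subcomplex.
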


By a \emph{covering} we mean a \emph{simplicial covering}, i.e.$\ $ a simplicial map restricting to isomorphisms from $1$-balls onto their images.

The following procedure will be applied frequently when proving the paper's main results.

\begin{definition}\label{def-2.4}
Given a path $\gamma = (v_{0}, v_{1}, ..., v_{n})$ in a simplicial complex $X$, one can \emph{tighten} it to a full path $\gamma'$ with the same endpoints by repeatedly applying the following operations:

$\bullet$ if $v_{i}$ and $v_{j}$ are adjacent in $X$ for some $j > i+1$, then remove from the sequence all $v_{k}$ where $i < k < j$;

$\bullet$ if $v_{i}$ and $v_{j}$ coincide in $X$ for some $j > i$, then remove from the sequence all $v_{k}$ where $i < k \leq j$.

We call $\gamma'$ a \emph{tightening} of $\gamma$. We allow the trivial case when $\gamma$ is already full. Then its tightening is the path itself.
\end{definition}

A loop $\gamma$ can be specified by taking the union of a pair of full paths $\gamma'$ and $\gamma''$ which have the same endpoints. The resultant loop is full if and only if no internal vertex (i.e. non-endpoint) of $\gamma'$ coincides with or is adjacent to any internal vertex of $\gamma''$.

We define next bigons between two points of a simplicial complex. First we consider these points to lie on edges of the complex. Secondly we consider the points to be vertices of the complex. Thirdly we consider the points to be one a vertex of the complex and the second the midpoint of an edge of the complex. In the first two cases the bigons have even length. In the third case the bigon has odd length.

Let $x$ and $x'$ be points of $X$ such that $x$ lies inside an edge $\langle v_{1}, w_{1} \rangle$, while $x'$ lies inside another edge $\langle v_{n}, w_{n} \rangle$. The points $x$ and $x'$ are chosen such that there are two distinct $1$-skeleton geodesics $\gamma_{1}$ and $\gamma_{2}$ of equal length joining them.
Let $\langle x,w_{1} \rangle = \{ y \in \langle v_{1},w_{1} \rangle | d(x,w_{1}) = d(x,y) + d(y,w_{1}) \}$. Note that $\langle x,w_{1} \rangle \subset \langle v_{1},w_{1} \rangle$.
Let $\gamma_{1} = \langle x,v_{1} \rangle \cup (v_{1}, ..., v_{n}) \cup \langle v_{n},x' \rangle$ and let $\gamma_{2} = \langle x,w_{1} \rangle \cup (w_{1}, ..., w_{n}) \cup \langle w_{n},x' \rangle$. We denote by $|\gamma_{1}| = d(x,v_{1}) + |(v_{1}, ..., v_{n})| + d(v_{n},x')$ the \emph{length} of $\gamma_{1}$. Let $B$ be the union of $\gamma_{1}$ and $\gamma_{2}$. We call $B$ a \emph{bigon} between the points $x$ and $x'$ in the metric on $X^{(1)}$ in which each edge has length $1$. The \emph{length} of the bigon $B$ is equal to $|\gamma_{1}| + |\gamma_{2}| = 2 \cdot |\gamma_{1}|$. We say $B$ is \emph{of even length}.

 Let $w_{1}$ and $v_{n}$ be vertices of $X$ such that there are two distinct $1$-skeleton geodesics $\gamma_{1}$ and $\gamma_{2}$ of equal length joining them. Let $\gamma_{1} = \langle w_{1},v_{1} \rangle \cup (v_{1}, ..., v_{n})$ and let $\gamma_{2} = (w_{1}, ..., w_{n}) \cup \langle w_{n},v_{n} \rangle$. Let $B$ be the union of $\gamma_{1}$ and $\gamma_{2}$. We call $B$ a \emph{bigon} between the vertices $w_{1}$ and $v_{n}$ in the metric on $X^{(1)}$ in which each edge has length $1$. The \emph{length} of the bigon $B$ is an even integer which is equal to $|\gamma_{1}| + |\gamma_{2}| = 2 \cdot |\gamma_{1}|$. We say $B$ is \emph{of even length}.
 We say that the geodesics $\gamma_{1}$ and $\gamma_{2}$ are $k$\emph{-close} and that the bigon $B$ is $k$\emph{-thin} if $d_{c}(v_{i},w_{i+1}) \leq k, \forall i \in \{ 1, ..., n-1 \}$.

 Let $v$ be a vertex of $X$ and let $m$ be the midpoint of en edge $\langle v_{n},w_{n} \rangle$ of $X$. The points $v$ and $m$ are chosen such that there are two distinct $1$-skeleton geodesics $\gamma_{1} = (v,v_{1}, ..., v_{n}) \cup \langle v_{n},m \rangle$ and $\gamma_{2} = (v, w_{1}, ..., w_{n}) \cup \langle w_{n},m \rangle$ of equal length between them. Let $B$ be the union of $\gamma_{1}$ and $\gamma_{2}$. We call $B$ a \emph{bigon} between the points $v$ and $m$ in the metric on $X^{(1)}$ in which each edge has length $1$. The \emph{length} of the bigon $B$ is an odd integer which is equal to $|\gamma_{1}| + |\gamma_{2}| = 2 \cdot |(v, v_{1}, ..., v_{n})| + 1$. We say $B$ is \emph{of odd length}. We say that the geodesics $\gamma_{1}$ and $\gamma_{2}$ are $k$\emph{-close} and that the bigon $B$ is $k$\emph{-thin} if $d_{c}(v_{i},w_{i}) \leq k, \forall i \in \{ 1, ..., n \}$.

\section{Hyperbolicity}

In this section we show that the $8$-location on a simplicial complex enjoying the $SD'$ property, implies Gromov hyperbolicity. We start with two useful lemmas.

\begin{lemma}\label{3.1}
Let $X$ be an $8$-located simplicial complex which satisfies the $SD'_{n}(O)$ property for some vertex $O$, $n \geq 2$. Let $v \in S_{n+1}(O)$ and let $y,z \in B_{n}(O)$ be such that $v \sim y,z$
and $d_{c}(y,z) = 2$.
Let $w \in B_{n}(O)$ be a vertex such that $w \sim y, v, z$, given by the vertex condition (V). Consider the vertices $u_{1}, u_{2} \in B_{n-1}(O)$ such that
$u_{1} \sim  y, w$ and $u_{2} \sim  w, z$, given by the triangle condition (T). If $u_{1} \nsim z$ and $u_{2} \nsim y$, then
$u_{1} \sim u_{2}$.
\end{lemma}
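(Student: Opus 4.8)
The plan is to argue by contradiction: assume $u_1 \nsim u_2$ and produce a short full homotopically trivial loop to which $8$-location applies, then derive a contradiction from the resulting filling. First I would collect the adjacencies we already know. We have the triangles $\langle y,u_1,w\rangle$ and $\langle w,u_2,z\rangle$ in $X$, the edges $vy$, $vz$, $vw$, $wy$, $wz$, and the hypotheses $yz \notin X$ (since $d(y,z)=2$), $u_1 z \notin X$, $u_2 y \notin X$. Under the contradiction hypothesis $u_1 u_2 \notin X$ as well. The natural candidate loop is $\gamma = (y, u_1, w, u_2, z, v)$ of length $6$; one should check it is an embedded cycle and, crucially, that it is \emph{full}: the only pairs of non-consecutive vertices are $(y,w)$, $(u_1,u_2)$, $(u_1,z)$, $(w,z)$, $(y,z)$, $(y,u_2)$, $(u_1,v)$, $(u_2,v)$, and we must know each such pair is a non-edge or else adjust the loop. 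The pairs $yz$, $u_1 z$, $u_2 y$, $u_1 u_2$ are non-edges by assumption; $yw$ and $wz$ \emph{are} edges, so $\gamma$ as written is not full — instead I would work with the full cycle obtained by noting $w$ is an internal vertex of a potential filling, i.e. consider the $6$-loop $\sigma = (y, u_1, \cdot, u_2, z, v)$ only after removing the chords through $w$. The cleaner route: the loop $(y, u_1, w, u_2, z, v)$ has chords $yw$ and $wz$; its "boundary" once we fill the two triangles $\langle y,u_1,w\rangle$, $\langle w,u_2,z\rangle$ and, via the vertex condition, the triangles $\langle w,y,v\rangle$... but $wv, wy, wz$ exist, so in fact $w$ is adjacent to all of $y,u_1,u_2,z,v$ except possibly we do not yet know $w u_1$? — no, $w u_1$ is an edge of the triangle. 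So $w \sim y,u_1,u_2,z,v$, meaning $(y,u_1,u_2,z,v)$ together with $w$ is a candidate $5$-wheel, provided $(y,u_1,u_2,z,v)$ is a full $5$-cycle.

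So the core step is: show $\tau = (y,u_1,u_2,z,v)$ is a full $5$-cycle. Its edges $y u_1$, $u_1 u_2$ (this is exactly what we want to contradict — wait, this is circular). I must instead not assume the edge but rather: the non-consecutive pairs in the $5$-cycle $(y, u_1, u_2, z, v)$ are $(y,u_2)$, $(y,z)$, $(u_1,z)$, $(u_1,v)$, $(u_2,v)$; we know $yu_2, yz, u_1 z$ are non-edges, so fullness reduces to checking $u_1 v$ and $u_2 v$ are non-edges. Since $u_1, u_2 \in B_{n-1}(O)$ and $v \in S_{n+1}(O)$, we have $d(u_1,v) \geq 2$ and $d(u_2,v) \geq 2$, so indeed $u_1 v, u_2 v \notin X$. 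Now $\tau$ together with $w$ would be a $5$-wheel, hence $\tau$ is a full $5$-loop that bounds (via $w$) — but $\tau$ is homotopically trivial, has length $5 \leq 8$, and is full, so by $8$-location it lies in a $1$-ball $B_1(t)$ for some vertex $t$. In particular $t \sim y, u_1, u_2, z, v$; from $t \sim u_1$ and $t\sim u_2$ we get $t\sim y, u_2$ forcing... actually once $t$ is adjacent to all five vertices of $\tau$, and $t \sim u_1, u_2$, this does not immediately give $u_1 \sim u_2$. The extraction of the contradiction is the delicate part: I expect one must use that $t \sim v$ with $v \in S_{n+1}(O)$ forces $t \in S_n(O) \cup S_{n+1}(O)$ (since $d(t,v)\le 1$), while $t \sim u_1$ with $u_1\in B_{n-1}(O)$ forces $t \in B_n(O)$, hence $t \in S_n(O)$; then $t, y, u_1$ all adjacent with $t,y \in B_n(O)$ and the triangle $\langle y,u_1,w\rangle$ chosen with $u_1 \nsim z$... and one compares $t$ against the choices already made via (T) and (V) to force $u_1 = t = u_2$ or $u_1 \sim u_2$.

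The main obstacle, then, is the last step — converting "$\tau$ is full, short, trivial, hence in a $1$-ball $B_1(t)$" into "$u_1 \sim u_2$". The resolution should exploit the radial position constraints: $t$ being simultaneously within distance $1$ of $v \in S_{n+1}(O)$ and of $u_i \in B_{n-1}(O)$ pins $t$ to $S_n(O)$, and then $t$ plays the role of the vertex the conditions (T) and (V) would have selected, so the minimality/compatibility built into those conditions (or a direct application of (V) to the pair $u_1, u_2 \in X_t \cap B_{n-1}(O)$, noting $t \in S_n(O)$) yields a common neighbor and ultimately the edge $u_1 u_2$, possibly degenerating to $u_1 = u_2$. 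I would also need to handle the degenerate cases where $w = u_1$ or $w = u_2$ or $u_1 = u_2$ separately, but those are immediate. A secondary subtlety is confirming $\tau$ is genuinely embedded — i.e. that $y, u_1, u_2, z, v$ are pairwise distinct apart from the explicitly allowed coincidence $u_1 = u_2$ — which follows from $d(y,z) = 2$, the distinctness of sphere radii, and the non-adjacencies $u_1 \nsim z$, $u_2 \nsim y$.
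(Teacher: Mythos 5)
Your overall strategy---produce a short full homotopically trivial loop containing $v$, apply $8$-location, and extract a contradiction---is the same as the paper's, but the execution breaks down in a way you partially notice but do not escape. The loop $\tau=(y,u_1,u_2,z,v)$ is not a cycle under the contradiction hypothesis $u_1\nsim u_2$: it is missing exactly the edge $u_1u_2$ that the lemma is trying to establish, so you cannot form it, let alone check it is full. The fallback $6$-loop $(y,u_1,w,u_2,z,v)$ is a genuine cycle but has chords $yw$ and $wz$, so it is not full and $8$-location does not apply. You cannot simply ``drop $w$'' from the $6$-loop, because doing so disconnects the path at $u_1,u_2$. This circularity is a fatal gap, not a detail to be cleaned up.

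There is a second gap even downstream of the first: supposing one had such a full loop, $8$-location only produces a vertex $t$ adjacent to all of $y,u_1,u_2,z,v$. From $t\sim u_1$ and $t\sim u_2$ one cannot conclude $u_1\sim u_2$; and your suggested remedy---apply (V) to $u_1,u_2\in X_t\cap B_{n-1}(O)$---again only yields a \emph{common neighbor} of $u_1$ and $u_2$, which is not the same as the edge $\langle u_1,u_2\rangle$.

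The paper avoids both problems by descending one more level into the ball. Assuming $d(u_1,u_2)=2$, condition (V) gives $u'\in B_{n-1}(O)$ with $u'\sim u_1,w,u_2$; condition (T) gives $t_1,t_2\in B_{n-2}(O)$ with $\langle u_1,t_1,u'\rangle,\langle u',t_2,u_2\rangle\in X$; and (V) gives $t'\in B_{n-2}(O)$ with $t'\sim t_1,u',t_2$. After a short case split (on whether $u_1\sim t_2$, $u_1\sim t'$, or $t_1=t_2$), one obtains a \emph{full} homotopically trivial loop through $v,z,u_2,\dots,u_1,y$ of length at most $8$ that routes around the bottom through $B_{n-2}(O)$ and therefore does not touch $w$, avoiding the chord problem entirely. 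The contradiction is then a distance count, not a further application of (V): $8$-location puts the loop in a $1$-ball, forcing $d(v,t_2)\le 2$, while $v\in S_{n+1}(O)$ and $t_2\in B_{n-2}(O)$ force $d(v,t_2)\ge 3$. This descent to $B_{n-2}(O)$ and the resulting distance contradiction are the two ingredients your proposal is missing.
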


\begin{proof}
The proof is by contradiction. Assume that $d_{c}(u_{1},u_{2}) = 2$. Let $u' \in B_{n-1}(O)$ be a vertex such that $u' \sim u_{1}, w, u_{2}$, $u' \neq u_{1}$, $u' \neq u_{2}$, given by the vertex condition (V).
Let $t_{1}, t_{2} \in B_{n-2}(O)$ be vertices such that
$\langle  u_{1}, t_{1}, u' \rangle,  \langle  u', t_{2} ,u_{2} \rangle \in X$, given by the triangle condition (T). Let $t' \in B_{n-2}(O)$ be a vertex such that
$t' \sim t_{1}, u', t_{2}$ (possibly with $t' = t_{2}$),
given by the vertex condition (V).

Note that $v \in S_{n+1}(O), y,z \in B_{n}(O), u_{1}, u', u_{2} \in B_{n-1}(O)$, and $t_{1}, t', t_{2} \in B_{n-2}(O)$.
Hence there are no edges between the vertices $y,v,z$ and the vertices $t_{1}, t', t_{2}$. Also there are no edges between the vertices $u_{1}, u', u_{2}$ and the vertex $v$.
Let $\gamma'$ be the path $(u_{1}, y, v, z, u_{2})$ which is full because $u_{1} \nsim z$ and $u_{2} \nsim y$. Let $\gamma''$ be a tightening of the path $(u_{2}, t_{2}, t', t_{1}, u_{1})$.
Note that $\gamma = \gamma' \cup \gamma''$ is a full loop of length at most $8$.

By $8$-location, it follows
that $\gamma$ is contained in the link of a vertex. This implies that $d_{c}(v, t_{1}) = 2$, respectively that $d_{c}(v, t') = 2$. But since $v \in S_{n+1}(O)$ and $t_{1} \in B_{n-2}(O)$, respectively $t' \in B_{n-2}(O)$, we have that $d_{c}(v, t_{1}) = 3$,
respectively that $d_{c}(v, t') = 3$.
This yields a contradiction. So $u_{1} \sim u_{2}$.

\begin{figure}[h]
    \begin{center}
       \includegraphics[height=2.5cm]{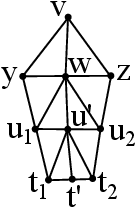}
      \caption{}
    \end{center}
\end{figure}

\end{proof}

\begin{lemma}\label{3.2}
Let $X$ be an $8$-located simplicial complex which satisfies the $SD'_{n}(O)$ property for some vertex $O$, $n \geq 2$. Let $v_{1}, v_{2}, v_{3} \in B_{n-1}(O)$ be such that $v_{1} \sim v_{2} \sim v_{3}$. Let $w_{1},w_{2} \in B_{n-2}(O)$ be such that $w_{1} \sim v_{1},v_{2}$ and
$w_{2} \sim v_{2},v_{3}$, given by the triangle condition (T).
Let $p_{1},p_{2} \in B_{n}(O)$ be such that $p_{1} \sim v_{1},v_{2}$ and
$p_{2} \sim v_{2},v_{3}$, given by the triangle condition (T).
Then $w_{1} \sim w_{2}$ if and only if
$p_{1} \sim p_{2}$.
\end{lemma}

\begin{proof}
We show that if $w_{1} \sim w_{2}$ then $p_{1} \sim p_{2}$. The inverse implication can be proven similarly.
Assume by contradiction that $d_{c}(p_{1},p_{2}) = 2$.
Let $p' \in B_{n}(O)$ be such that $p' \sim p_{1}, v_{2}, p_{2}$, $p' \neq p_{1}$, $p' \neq p_{2}$, given by the vertex condition (V). We consider the vertices $u_{1}, u_{2} \in B_{n+1}(O)$ such that
$u_{1} \sim  p_{1}, p'$ and $u_{2} \sim  p_{2}, p'$, given by the triangle condition (T). Because $d_{c}(p_{1},p_{2}) = 2$, Lemma \ref{3.1} implies that $u_{1} \sim u_{2}$. Let $\gamma'$ be a tightening of the path $(v_{1},p_{1},u_{1},u_{2},p_{2},v_{3})$ and let $\gamma''$ be a tightening of the path $(v_{3},w_{2},w_{1},v_{1})$. Then $\gamma = \gamma' \cup \gamma''$ is a full loop of length is at most $8$. By $8$-location, $\gamma$ is contained in the link of a vertex. So $d_{c}(w_{1},u_{1}) = 2$. But, since $w_{1} \in B_{n-2}(O)$ and $u_{1} \in B_{n+1}(O)$, we have $d_{c}(w_{1},u_{1}) = 3$. Because we have reached a contradiction, it follows  that $p_{1} \sim p_{2}$.

\begin{figure}[h]
    \begin{center}
       \includegraphics[height=2.5cm]{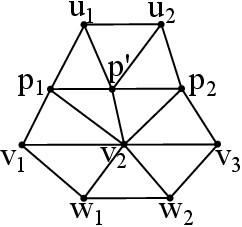}
      \caption{}
    \end{center}
\end{figure}

\end{proof}

In the next theorem we show the thinness of discrete intervals in an $8$-located simplicial complex satisfying the SD' property.

\begin{theorem}\label{3.3}
Let $X$ be an $8$-located simplicial complex which satisfies the $SD'$ property. Then intervals of $X$ are $2$-thin.
\end{theorem}

\begin{proof} Let $O$ and $O'$ be two vertices of $X$ such that $d_{c}(O,O')= n$. Let $I$ be an interval i.e. the set of vertices lying on geodesics between $O$ and $O'$.  For all $k \leq n$, let $I_{k} = S_{k}(O) \cap S_{n-k}(O') = S_{k}(O) \cap I$.
We prove by contradiction that for every $k \leq n$ and for every two vertices $v,w \in I_{k},$ we have $d_{c}(v,w) \leq 2$.

We build a full path in $I_{k}$ of diameter $3$ as in \cite{O-8loc}. Suppose there are vertices $v,w \in I_{k}$ such that $d_{c}(v,w) > 2$. Let $k$ be the maximal natural number for which this happens. Then there exist vertices $v',w'$ in $I_{k+1}$ such that
$v' \sim v$, $w' \sim w$, and $d_{c}(v',w') \leq 2$.

By the vertex condition (V), there is a vertex $z$ in $I_{k+1}$ such that $z \sim v',w'$, possibly with $z = w'$. By the triangle condition (T), there are vertices
$v'',w'' \in I_{k}$ such that $\langle  v', v'', z \rangle, \langle  z, w'', w' \rangle \in X$ (with $v'' = w''$ if $z=w'$).
By the vertex condition (V), there are vertices $s,t$ and $u$ in $I_{k}$ such that $s \sim v,v',v''; t \sim v'',z,w''; u \sim w,w',w''$ (possibly with $s=v'', t = w''$, and $u=w$).
Among the vertices $t,w'',u,w$ we choose the first one (in the given order), that is at distance $3$ from $v$.
Denote this vertex by $v'''$. In this way we obtain a full path $(v_{1}, v_{2}, v_{3}, v_{4})$ in $I_{k}$ of diameter $3$, with $v_{1} = v$ and
$v_{4} = v'''$. We will show that such a path can not exist reaching hereby a contradiction and proving the theorem.

Note that because $X$ satisfies the $SD'$ property, it is, by Proposition \ref{2.1}, simply connected. Any loop in $X$ is therefore homotopically trivial.

By the triangle condition (T), there exist vertices $w_{i}$ in $I_{k-1}$ such that $\langle  v_{i}, w_{i},$ $v_{i+1} \rangle \in X, 1 \leq i \leq 3$. We discuss further all possible cases
(and the corresponding subcases) of mutual relations between the vertices $w_{i}, 1 \leq i \leq 3$ (up to renaming vertices).
Case I is when $w_{1} = w_{2}$.
Case II is when $w_{1} \neq w_{2} \neq v_{3}$.

By the triangle condition (T), there are vertices $p_{i}$ in $I_{k+1}$ such that $\langle  v_{i}, p_{i}, v_{i+1} \rangle$ $\in X, 1 \leq i \leq 3$
(possibly with $p_{1} = p_{2}$ or $p_{2} = p_{3}$).

Note that if $p_{1} = p_{3}$ or $w_{1} = w_{3}$, then $d_{c}(v_{1},v_{4}) = 2$. This implies contradiction with $d_{c}(v_{1},v_{4}) = 3$. If $v_{1} \sim w_{3}$ or $v_{4} \sim w_{1}$ we also get contradiction with $d_{c}(v_{1},v_{4}) = 3$. If $v_{1} \sim w_{2}$ and $v_{4} \sim w_{2}$, contradiction with $d_{c}(v_{1},v_{4}) = 3$ yields as well.

\subsection{Case I}

We start treating the case when $w_{1} = w_{2}$ which has two subcases. Case I.$1$ is when $w_{2} \sim w_{3}$. Case I.$2$ is when $d_{c}(w_{2}, w_{3}) = 2$.
Since $d_{c}(v_{1}, v_{3}) = 2$, Lemma \ref{3.1} implies that $p_{1} \sim p_{2}$.

\subsubsection{Case I.$1$}

We treat the case when $w_{1} = w_{2}, w_{2} \sim w_{3}$. Lemma \ref{3.2} implies that $p_{2} \sim p_{3}$.

Let $\gamma'$ be the path $(v_{1}, w_{2}, w_{3}, v_{4})$ which is full. Let $\gamma''$ be a tightening of the path $(v_{4}, p_{3},$ $ p_{2}, p_{1}, v_{1})$. Note that $w_{2}, w_{3} \in I_{k-1}$ and $p_{1}, p_{2}, p_{3} \in I_{k+1}$. Hence there are no edges between the vertices $w_{2}, w_{3}$ and the vertices $p_{1}, p_{2}, p_{3}$. Then $\gamma = \gamma' \cup \gamma''$ is a full loop of length at most $7$. By $8$-location it follows that $\gamma$ is contained in the link of a vertex. So $d_{c}(v_{1}, v_{4}) = 2$ which yields contradiction with $d_{c}(v_{1}, v_{4}) = 3$.

\begin{figure}[h]
    \begin{center}
       \includegraphics[height=2cm]{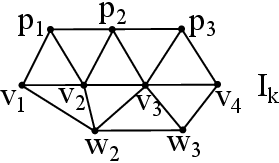}
      \caption{Case I.$1$}
    \end{center}
\end{figure}

\subsubsection{Case I.$2$}

We treat further the case when $w_{1} = w_{2}, d_{c}(w_{2}, w_{3}) = 2$.
By the vertex condition (V), there exists a vertex $w' \in I_{k-1}$ such that $w' \sim w_{2}, v_{3}, w_{3}$.

Note that if $v_{1} \sim w'$ and $v_{4} \sim w'$, we have contradiction with $d_{c}(v_{1}, v_{4}) = 3$.
The situation $v_{1} \sim w'$ but $v_{4} \nsim w'$ can be treated similar to case I.$1$. The situation $v_{4} \sim w'$
but $v_{1} \nsim w'$ can be also treated similar to case I.$1$. Assume that from now on such situations do not occur.

Assume that $v_{2} \sim w_{3}$. Because $d_{c}(v_{2}, v_{4}) = 2$, Lemma \ref{3.1} implies that $p_{2} \sim p_{3}$.
We consider the path $\beta' = (v_{1}, w_{2}, w', w_{3}, v_{4})$ which is full. Let $\beta''$ be a tightening of the path $(v_{4}, p_{3}, p_{2}, p_{1}, v_{1})$. Since the loop $\beta' \cup \beta''$ is full and its length is at most $8$, by $8$-location, we get contradiction with $d_{c}(v_{1}, v_{4}) = 3$. From now on assume that $v_{2} \nsim w_{3}$.

\begin{figure}[ht]
    \begin{center}
        \includegraphics[height=2.5cm]{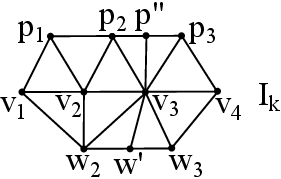}
      \caption{Case I.$2$}
    \end{center}
\end{figure}

We consider the path $\gamma' = (v_{1}, w_{2}, w', w_{3}, v_{4})$ which is full. Let $\gamma''$ be a tightening of the path $(v_{4}, p_{3}, p'',$ $ p_{2}, p_{1}, v_{1})$. Note that the loop $\gamma = \gamma' \cup \gamma''$ is full and its length is at most $9$.
If the length of $\gamma$ is at most $8$, it is contained, by $8$-location, in the link of a vertex. Then we get contradiction with $d_{c}(v_{1}, v_{4}) = 3$.
For the rest of case I.$2$ we treat the case when $|\gamma| = 9$.  Note that then $d_{c}(p_{2}, p_{3}) = 2$ and $v_{1} \nsim p''$. By the triangle condition (T), there are vertices $q_{1}, q_{2}$ in $I_{k+2}$ such that $\langle  p_{1}, q_{1}, p_{2} \rangle,$ $ \langle  p_{2}, q_{2}, p'' \rangle $ $\in X$. Because $v_{2} \sim v_{3}$, Lemma \ref{3.2} implies that $q_{1} \sim q_{2}$.

$\bullet$ Assume that $p_{1} \nsim v_{3}$.
We consider the path $\beta' = (p_{1}, v_{1}, w_{2}, v_{3}, p'')$. Let $\beta''$ be a tightening of the path $(p'', q_{2}, q_{1}, p_{1})$.
Note that $\beta = \beta' \cup \beta''$ is full. Then because $\beta$ has length at most $7$,
it follows, by $8$-location, that it is contained in the link of a vertex. So $d_{c}(q_{1}, w_{2}) = 2$. Because $d_{c}(q_{1}, w_{2}) = 3$, we have reached a contradiction.

\begin{figure}[h]
    \begin{center}
        \includegraphics[height=2.8cm]{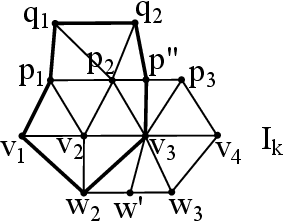}
      \caption{Case I.$2$: $|\gamma| = 9$, $d_{c}(p_{2}, p_{3}) = 2, v_{1} \nsim p'', p_{1} \nsim v_{3}$}
    \end{center}
\end{figure}

$\bullet$ Assume that $p_{1} \sim v_{3}$. By the triangle condition $(T)$,
there is a vertex $q_{3} \in I_{k +2}$ such that $\langle p'', q_{3}, p_{3} \rangle \in X$.
Because $d_{c}(p_{2},p_{3}) = 2$, Lemma \ref{3.1} implies that $q_{2} \sim q_{3}$.
We consider the full path $\beta' = (p_{1}, v_{3}, p_{3})$. Let $\beta''$ be a tightening of the path $(p_{3}, q_{3}, q_{2}, q_{1}, p_{1})$. Because $\beta = \beta' \cup \beta''$ is full and it has length at most $6$, by $8$-location, there exists a vertex $p_{02}$ adjacent to all vertices
of $\beta$. Note that $p_{02} \in I_{k+1}$. We consider the full path $\alpha' = (v_{1}, w_{2}, w', w_{3}, v_{4})$. Let
$\alpha''$ be a tightening of the path $(v_{4}, p_{3}, p_{02}, p_{1}, v_{1})$. Because $\alpha' \cup \alpha''$ is full and it has length at most $8$, by $8$-location, we get contradiction with $d_{c}(v_{1}, v_{4}) = 3$.

\begin{figure}[ht]
    \begin{center}
        \includegraphics[height=2.8cm]{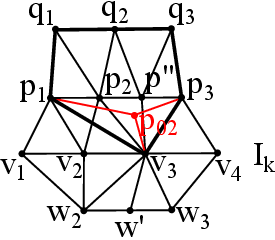}
      \caption{Case I.$2$: $|\gamma| = 9$, $d_{c}(p_{2}, p_{3}) = 2, v_{1} \nsim p'', p_{1} \sim v_{3}$}
    \end{center}
\end{figure}

\subsection{Case II}

We treat next the case when $w_{1} \neq w_{2} \neq w_{3}$ which has $3$ subcases. Case II.$1$ is when $w_{1} \sim w_{2} \sim w_{3}$. Case II.$2$ is when
$w_{1} \sim w_{2}$ and $d_{c}(w_{2}, w_{3}) = 2$.
Case II.$3$ is when $d_{c}(w_{1}, w_{2}) = d_{c}(w_{2}, w_{3}) = 2$.

The situation $v_{3} \sim w_{1}$ can be treated like case I. The case $v_{1} \sim w_{2}$ but $v_{4} \nsim w_{2}$ can be treated like case I$.1$. From now on assume that such situations do not occur.

We consider the case when $w_{1} \sim w_{3}$. We consider the path $\gamma' = (v_{1}, w_{1}, w_{3}, v_{3})$ which is full. Let $\gamma''$ be a tightening of the path $(v_{3}, p_{2}, p', p_{1}, v_{1})$. Since the loop $\gamma = \gamma' \cup \gamma''$ is full and its length is at most $7$, by $8$-location,
 $\gamma$ is contained in the link of a vertex
$v_{02}$. Note that $v_{02} \in I_{k}$. If $v_{02} \sim v_{4}$ we have contradiction with $d_{c}(v_{1}, v_{4}) = 3$. Because the path $(v_{1}, v_{2}, v_{3}, v_{4})$ is full,
and $v_{02} \sim v_{1}, v_{3}$, the path $(v_{1}, v_{02}, v_{3}, v_{4})$ is also full.
Moreover, since $\langle  v_{1}, w_{1}, v_{02} \rangle$,
$\langle  v_{02}, w_{3}, v_{3} \rangle$, $\langle  v_{3}, w_{3}, v_{4}\rangle \in X$ and $w_{1} \sim w_{3}$, we are in case I$.1$.
From now on assume that $w_{1} \nsim w_{3}$.

\begin{figure}[ht]
    \begin{center}
        \includegraphics[height=2.5cm]{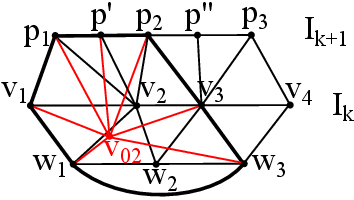}
      \caption{Case II: $w_{1} \sim w_{3}$ }
    \end{center}
\end{figure}

\subsubsection{Case II.$1$}

We treat the case when $w_{1} \sim w_{2} \sim w_{3}$. Lemma \ref{3.2} implies that $p_{1} \sim p_{2} \sim p_{3}$.

\begin{figure}[h]
    \begin{center}
        \includegraphics[height=2.5cm]{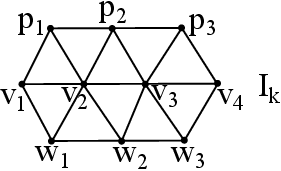}
      \caption{Case II.$1$}
    \end{center}
\end{figure}

Let $\gamma' = (v_{1},w_{1},w_{2},w_{3},v_{4})$ and let $\gamma''$ be a tightening of the path $(v_{4},p_{3},p_{2},p_{1},v_{1})$. Note that the loop $\gamma = \gamma' \cup \gamma''$ is full. Thus, since its length is at most $8$, $\gamma$ is contained, by $8$-location, in the link of a vertex. This yields contradiction with $d_{c}(v_{1},v_{4}) = 3$.

\subsubsection{Case II.$2$}

Suppose $w_{1} \sim w_{2}$ and $d_{c}(w_{2}, w_{3}) = 2$. By the vertex condition (V), there is a vertex $w'$ in $I_{k-1}$ such that $w' \sim w_{2}, v_{3}, w_{3}$ (possibly with $w' = w_{3}$).

Note that if $v_{1} \sim w'$ and $v_{4} \sim w'$, we have contradiction with $d_{c}(v_{1}, v_{4}) = 3$. The case $v_{1} \sim w'$ but $v_{4} \nsim w'$ can be treated like case I.$1$. The case $v_{4} \sim w'$ but $v_{1} \nsim w'$ can be treated like case II.$1$. Assume that from now on such situations do not occur.

By the triangle condition (T), there are vertices $u_{1}, u_{2}, u_{3} \in I_{k-2}$ such that
$\langle  w_{1},$ $ u_{1}, w_{2} \rangle,$ $\langle w_{2},  u_{2}, w' \rangle, \langle  w', u_{3}, w_{3} \rangle \in X$. Because $v_{2} \sim v_{3}$, Lemma \ref{3.2} implies that $u_{1} \sim u_{2}$. Because $d_{c}(w_{2},w_{3}) = 2$, Lemma \ref{3.1} implies that $u_{2} \sim u_{3}$.

Let $\gamma'$ be the path $(w_{1}, v_{2}, v_{3}, w_{3})$ which is full. Let $\gamma''$ be a tightening of the path $(w_{3}, u_{3}, u_{2}, u_{1}, w_{1})$. Because $\gamma = \gamma' \cup \gamma''$ is full and it has length at most $7$, by $8$-location, there is a vertex $w_{02}$ adjacent to all vertices of $\gamma$. Note that $w_{02} \in I_{k-1}$.
Since $\langle  v_{1}, w_{1}, v_{2} \rangle, \langle  v_{2}, w_{02}, v_{3} \rangle,$
$\langle  v_{3}, w_{3}, v_{4} \rangle$ $\in X$ and $w_{1} \sim w_{02} \sim w_{3}$, we are in case II$.1$.

\begin{figure}[h]
    \begin{center}
        \includegraphics[height=2.5cm]{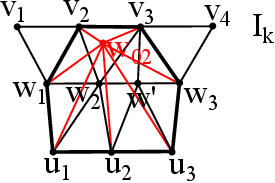}
      \caption{Case II.$2$}
    \end{center}
\end{figure}

\subsubsection{Case II.$3$}

Suppose $d_{c}(w_{1}, w_{2}) = d_{c}(w_{2}, w_{3}) = 2$.
By the vertex condition (V), there are vertices $w',w''$ in $I_{k-1}$ such that $w' \sim w_{1}, v_{2}, w_{2}$, and $w'' \sim w_{2}, v_{3}, w_{3}$.

Note that if $v_{1} \sim w'$ and $v_{4} \sim w'$, we have contradiction with $d_{c}(v_{1}, v_{4}) = 3$. The case $v_{1} \sim w'$ but $v_{4} \nsim w'$ can be treated like case II.$2$. The case $v_{4} \sim w'$ but $v_{1} \nsim w'$ can be treated like case I.$1$. The case $v_{1} \sim w_{2}$, $v_{4} \nsim w_{2}$ can be treated like case I. Assume that from now on such situations do not occur.

By the triangle condition (T), there are vertices $u_{1}, u_{2}, u_{3}, u_{4} \in I_{k-2}$ such that $\langle  w_{1}, u_{1}, w' \rangle, \langle  w', u_{2}, w_{2} \rangle,$
$\langle  w_{2}, u_{3}, w'' \rangle, \langle  w'', u_{4}, w_{3} \rangle \in X$. Because $d_{c}(w_{1},w_{2}) = d_{c}(w_{2},w_{3}) = 2$, Lemma \ref{3.1} implies that $u_{1} \sim u_{2}$ and $u_{3} \sim u_{4}$.
Because $v_{2} \sim v_{3}$, Lemma \ref{3.2} implies that $u_{2} \sim u_{3}$.

Let $\gamma' = (w_{1}, v_{2}, v_{3}, w_{3})$ which is full. Let $\gamma''$ be a tightening of the path $(w_{3}, u_{4}, u_{3}, u_{2}, u_{1}, w_{1})$.
Because $\gamma = \gamma' \cup \gamma''$ is full and it has length at most $8$, by $8$-location, there is a vertex $w_{02}$ adjacent to all vertices of $\gamma$. Note that $w_{02} \in I_{k-1}$.
Since $\langle  v_{1}, w_{1}, v_{2} \rangle, \langle  v_{2}, w_{02}, v_{3} \rangle,
\langle  v_{3}, w_{3}, v_{4} \rangle$ $\in X$ and $w_{1} \sim w_{02} \sim w_{3}$, we are in case II$.1$.

\begin{figure}[ht]
    \begin{center}
        \includegraphics[height=2.4cm]{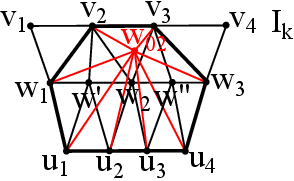}
      \caption{Case II.$3$}
    \end{center}
\end{figure}

\end{proof}

In the next lemma we show that a bigon of even length whose endpoints are points that belong to edges of a simplicial complex, can be considered as the union of two geodesics with endpoints at vertices of the complex.

\begin{lemma}\label{3.5}
Let $X$ be a simplicial complex. Let $x,x'$ be two points of $X$ such that $x$ lies inside one edge $\langle v_{1},w_{1} \rangle$, and $x'$ lies inside another edge $\langle v_{n},w_{n} \rangle$. Also $x$ and $x'$ are chosen such that there is a bigon of even length $B = \gamma_{1} \cup \gamma_{2}$ between them. Let $\gamma_{1} = \langle x,v_{1} \rangle \cup (v_{1}, ..., v_{n}) \cup \langle v_{n},x' \rangle$ and let $\gamma_{2} = \langle x,w_{1} \rangle \cup (w_{1}, ..., w_{n}) \cup \langle w_{n},x' \rangle$. Then $B = \beta_{1} \cup \beta_{2} = \gamma_{1} \cup \gamma_{2}$ (equality as sets), $\beta_{1}$ and $\beta_{2}$ being geodesics between the vertices $v_{1}$ and $w_{n}$ (or between the vertices $v_{n}$ and $w_{1}$).
\end{lemma}

\begin{proof} There are two cases.
The first case is when $d(v_{1},x) = d(v_{n},x') = \frac{1}{2}$. Then we may consider either $\beta_{1} =  (v_{1}, ..., v_{n}) \cup \langle v_{n},w_{n} \rangle$, $\beta_{2} = \langle v_{1},w_{1} \rangle \cup (w_{1}, ..., w_{n})$ or $\beta_{1} =  \langle w_{1},v_{1} \rangle \cup (v_{1}, ..., v_{n})$, $\beta_{2} = (w_{1}, ..., w_{n}) \cup \langle w_{n},v_{n} \rangle$.

The second case is when $d(v_{1},x) \neq \frac{1}{2}, d(v_{n},x') \neq \frac{1}{2}$. It has two subcases treated below.
We treat only the first subcase. The other subcase is similar.

\begin{enumerate}
\item The first subcase is when $d(v_{1},x) < d(x,w_{1})$. Since $B$ is a bigon, we have $d(x,v_{1}) + d(v_{n},x') = d(x,w_{1}) + d(w_{n},x')$. Hence \begin{equation}d(v_{n},x') > d(w_{n},x'). \end{equation} We choose the vertices $v_{1} \in \gamma_{1}$ and $w_{n} \in \gamma_{2}$ and we show that for these vertices the lemma holds.
\item The second subcase is when $d(v_{1},x) > d(x,w_{1})$. We choose the vertices $v_{n} \in \gamma_{1}$ and $w_{1} \in \gamma_{2}$.
\end{enumerate}

Suppose there is a geodesic $\gamma_{3}$ joining the vertices $v_{1}$ and $w_{n}$ which does not pass through $v_{n}$ such that $|\gamma_{3}| = |(v_{1}, ..., w_{n})| < |(v_{1}, ..., v_{n})| + d(v_{n},w_{n}) = |(v_{1}, ..., v_{n})| + 1$. Hence
\begin{equation}
|\gamma_{3}| \leq [|(v_{1}, ..., v_{n})| + 1] - 1 = |(v_{1}, ..., v_{n})|.
\end{equation}


 According to $(3.1)$ and $(3.2)$, we have:

 \begin{center}$d(x,v_{1}) + |(v_{1}, ..., w_{n})| + d(w_{n},x') = d(x,v_{1}) + |\gamma_{3}| + d(w_{n},x') < $
 $< d(x,v_{1}) + |\gamma_{3}| + d(v_{n},x') \leq d(x,v_{1}) + |(v_{1}, ..., v_{n})| + d(v_{n},x') = |\gamma_{1}|$.
 \end{center}
But $B = \gamma_{1} \cup \gamma_{2}$ is a bigon between the points $x$ and $x'$. Therefore $\gamma_{1}$ is a geodesic between $x$ and $x'$. The relation $d(x,v_{1}) + |(v_{1}, ..., w_{n})| + d(w_{n},x') < |\gamma_{1}|$ implies hence a contradiction. This ensures that there is no geodesic $\gamma_{3}$ between the vertices $v_{1}$ and $w_{n}$ as described above. So we have $B = \beta_{1} \cup \beta_{2} = \gamma_{1} \cup \gamma_{2}$ (equality as sets), $\beta_{1}$ and $\beta_{2}$ being geodesics between the vertices $v_{1}$ and $w_{n}$.


\end{proof}

We show further the thinness of bigons of even length in an $8$-located simplicial complex satisfying the SD' property.

\begin{lemma}\label{3.6}
Let $X$ be an $8$-located simplicial complex with the SD' property. Let $x,x'$ be either two vertices of $X$ or two points of $X$ such that $x$ lies inside one edge $\langle v_{1},w_{1} \rangle$ of $X$, while $x'$ lies inside another edge $\langle v_{n},w_{n} \rangle$ of $X$. The points $x$ and $x'$ are considered such that there is a bigon $B = \gamma_{1} \cup \gamma_{2}$ of even length between them. Then $B$ is $2$-thin.
\end{lemma}

\begin{proof} If $x$ and $x'$ are vertices of $X$, then the $2$-thinness of the bigon $B$ follows by Theorem \ref{3.3}.
If $x$ and $x'$ are not vertices of $X$, let $\gamma_{1} = \langle x,v_{1} \rangle \cup (v_{1}, ..., v_{n}) \cup \langle v_{n},x' \rangle$ and let $\gamma_{2} = \langle x,w_{1} \rangle \cup (w_{1}, ..., w_{n}) \cup \langle w_{n},x' \rangle$. According to the previous lemma, $B = \beta_{1} \cup \beta_{2} = \gamma_{1} \cup \gamma_{2}$ (equality as sets) where $\beta_{1}$ and $\beta_{2}$ are geodesics between the vertices $v_{1}$ and $w_{n}$ (or between the vertices $v_{n}$ and $w_{1}$). Then the lemma follows again by Theorem \ref{3.3}.

\end{proof}

In the next lemma we show the thinness of bigons of odd length in an $8$-located simplicial complex satisfying the SD' property.

\begin{lemma}\label{3.7} Let $X$ be an $8$-located simplicial complex with the SD' property. Let $B = \gamma_{1} \cup \gamma_{2}$ be a bigon of odd length between two points $x$ and $x'$ of $X$. Either both points $x$ and $x'$ lie inside edges of $X$ or one of them is a vertex of $X$ and the other is the midpoint of an edge of $X$. Then $B$ is $4$-thin.
\end{lemma}

\begin{proof} Let $X'$ be the barycentric subdivision of $X$. If $x$ and $x'$ lie inside edges of $X$ but are not vertices of $X'$, then one can show as in Lemma \ref{3.5} that $B = \beta_{1} \cup \beta_{2} = \gamma_{1} \cup \gamma_{2}$ (equality as sets), $\beta_{1}$ and $\beta_{2}$ being geodesics between two vertices of $X'$. Because $B$ has odd length, both endpoints of $\beta_{1}$ and $\beta_{2}$ can not be vertices of $X$. Let one endpoint of $\beta_{1}$ and $\beta_{2}$ be a vertex $O$ of $X$, and let the other endpoint be the midpoint $m$ of an edge $\langle x,y \rangle$ of $X$. Note that $O$ and $m$ are vertices of $X'$.
Since $B$ is a bigon between $O$ and $m$, we have $d_{c}(x,O) = d_{c}(y,O) = k+1$. Let $\alpha_{1} \subset \beta_{1}$ be a geodesic joining $O$ and $x$ of length $k+1$. Let $\alpha_{2} \subset \beta_{2}$ be a geodesic joining $O$ and $y$ of length $k+1$. For any natural number $n$, $X$ satisfies the $SD'_{n}(O)$ property. Then, by the triangle condition (T), for any $i \in \{1, ..., n\}$ and for any edge $e \subset S_{i+1}(O)$, we have $X_{e} \cap B_{i}(O) \neq \emptyset$. So for $\langle x,y \rangle \subset S_{k+1}(O)$, we have $X_{\langle x,y \rangle} \cap B_{k}(O) \neq \emptyset$. Hence there is a vertex $a \in X_{\langle x,y \rangle} \cap B_{k}(O)$. This implies that $d_{c}(a,O) = k$, $a \sim x$ and $a \sim y$.
So there is a geodesic $\alpha$ from $O$ to $a$ of length $k$. Therefore there is a geodesic $\alpha_{1}' = \alpha \cup \langle a,x \rangle$ from $O$ to $x$ of length $k+1$. Also there is a geodesic $\alpha_{2}' = \alpha \cup \langle a,y \rangle$ from $O$ to $y$ of length $k+1$. Let $B_{1} = \alpha_{1} \cup \alpha_{1}'$ be a bigon between $O$ and $x$. Let $B_{2} = \alpha_{2} \cup \alpha_{2}'$ be a bigon between $O$ and $y$. Both bigons $B_{1}$ and $B_{2}$ are of even length which is equal to $2 \cdot (k+1)$. Then the previous lemma implies that $\alpha_{1}$ is $2$-close to $\alpha_{1}'$, while $\alpha_{2}$ is $2$-close to $\alpha_{2}'$. Since $\alpha_{1}' = \alpha \cup \langle a,x \rangle$, $\alpha_{2}' = \alpha \cup \langle a,y \rangle$, while $x \sim y$, we note that $\alpha_{1}'$ is nearly the same as $\alpha_{2}'$. Thus $\alpha_{1}$ is $4$-close to $\alpha_{2}$. $B$ is therefore $4$-thin.

\end{proof}

We give the main result of the section.

\begin{theorem}\label{3.7}
Let $X$ be an $8$-located simplicial complex which satisfies the $SD'$ property. Then the $0$-skeleton of $X$ with a path metric induced from $X^{(1)}$, is $\delta$-hyperbolic, for a
universal constant $\delta$.
\end{theorem}

\begin{proof} The proof is similar to the one of the analogous Theorem $3.3$ given in \cite{O-8loc}.
According to \cite{Papa}, hyperbolicity of the $0$-skeleton follows by Lemma \ref{3.6} and by Lemma \ref{3.7}. These lemmas imply that in $X$ the bigons of even length are $2$-thin, while the bigons of odd length are $4$-thin. So all bigons of $X$ are $4$-thin. Hence we may conclude that the hyperbolicity constant is universal.

\end{proof}

\section{Local-to-global}

In this section we show that the universal cover of an $8$-located simplicial complex satisfies the $SD'$ property.

\begin{theorem}\label{4.1}
Let $X$ be an $8$-located simplicial complex. Then its universal cover $\widetilde{X}$ is a simplicial complex which satisfies the $SD'$ property.
\end{theorem}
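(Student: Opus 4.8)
The plan is to construct $\widetilde{X}$ explicitly as an increasing union of balls, following the inductive scheme of \cite{O-sdn} (and its refinements in \cites{BCCGO,ChaCHO}), and to verify along the way that each stage satisfies the relevant portion of the $SD'$ property; the map to $X$ will then be a covering, and simple connectivity of $\widetilde{X}$ will follow from Proposition \ref{2.1}. Concretely, fix a base vertex $O$ of $X$ and a preimage $\widetilde{O}$. Define simplicial complexes $\widetilde{B}_n$ together with simplicial maps $f_n\colon \widetilde{B}_n \to B_n(O,X)$ by induction on $n$: $\widetilde{B}_0$ is a point mapping to $O$, and $\widetilde{B}_{n+1}$ is obtained from $\widetilde{B}_n$ by coning off, for each vertex $\widetilde{v}$ of the sphere $\widetilde{S}_n$, the appropriate new simplices of the link of $f_n(\widetilde{v})$ in $X$, with the combinatorics of how these cones are glued dictated precisely by forcing conditions (T) and (V) to hold at level $n$. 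One sets $\widetilde{X} = \bigcup_n \widetilde{B}_n$ and $f = \bigcup_n f_n$. The key structural claim to carry through the induction is that $f_n$ restricted to every $1$-ball in $\widetilde{B}_n$ is an isomorphism onto its image, so that in the limit $f$ is a covering; the $SD'$ property is then built in by construction of the $\widetilde{B}_{n+1}$.

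The next step is to verify that $\widetilde{X}$ so constructed is genuinely simply connected and that $f$ is a covering in the sense defined in the preliminaries. Simple connectivity is immediate from Proposition \ref{2.1} once $SD'(\widetilde{O})$ is established, since the construction is symmetric enough that $SD'_n(\widetilde{O})$ holds for all $n$. That $f$ is a local isomorphism on $1$-balls is exactly the inductive hypothesis above; the only additional point is that $\widetilde{X}$ is connected, which is clear since it is a nested union of the connected complexes $\widetilde{B}_n$. Finally, since $X$ is connected (being $8$--located, hence flag and nonempty — or one restricts to a connected component), a connected simplicial cover of $X$ by a simply connected complex is the universal cover; so $\widetilde{X}$ is indeed \emph{the} universal cover of $X$, and it satisfies $SD'$ because we forced $SD'_n(\widetilde{v})$ at every vertex $\widetilde{v}$ and every level during the construction (the roles of different base vertices being interchangeable by the homogeneity of the inductive step).

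The heart of the argument — and the step I expect to be the main obstacle — is proving that the inductive hypothesis propagates: that coning off links of sphere vertices in the prescribed way does not destroy the property that $f_{n+1}$ is injective on $1$-balls, and simultaneously that conditions (T) and (V) can actually be realized consistently at level $n+1$. This is where $8$--location is used in an essential way. The potential failure is that two distinct new cone vertices over $\widetilde{S}_n$, or a new cone vertex and an old vertex of $\widetilde{B}_n$, get identified or linked in a way they shouldn't; ruling this out amounts to showing that certain short loops in $X$ arising from the combinatorics of $\widetilde{B}_n$ are homotopically trivial and full (or become full after a controlled modification) of length at most $8$, so that $8$--location forces them into a $1$-ball, which in turn forces the desired coincidences/adjacencies downstairs and hence compatibility upstairs. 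In practice this requires a careful case analysis of the local configurations in $\widetilde{S}_n$ — much in the spirit of Lemma \ref{3.1} and the case analysis in Theorem \ref{3.2} — exploiting that $\widetilde{B}_n$ already satisfies $SD'_{n}$, so that the triangle and vertex conditions at lower levels supply the auxiliary vertices needed to build the loops to which $8$--location applies. Establishing these local compatibility statements, and bookkeeping the finitely many mutual-position cases for the relevant vertices, is the real content; the rest is the standard colimit formalism for constructing universal covers.
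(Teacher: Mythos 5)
Your proposal follows the same inductive ball-by-ball construction as the paper (taken from \cite{O-sdn} and its refinements), with the same appeal to Proposition~\ref{2.1} for simple connectivity, the same inductive hypotheses (each $\widetilde{B}_i$ satisfies $SD'_{i-1}$ and $f_i$ is an isomorphism on $1$-balls), and the same identification of $8$-location as the engine driving the local case analysis. The step you defer to ``local compatibility statements'' is made precise in the paper as a single key lemma: the new sphere vertices are equivalence classes of pairs $(\widetilde{w},z)$ under the transitive closure of an adjacency relation $\stackrel{e}{\sim}$, and one must show that any three-step $\stackrel{e}{\sim}$-chain can be contracted to a two-step chain --- this is exactly where the $8$-location case analysis lives, and it is what simultaneously yields local injectivity of $f_{i+1}$ and the vertex condition (V) at the new level, so your outline has located the load-bearing point correctly even though it does not carry it out.
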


\begin{proof} The proof is similar to the one of the analogous Theorem $3.4$ given in \cite{O-8loc}.

We construct the universal cover $\widetilde{X}$ of $X$ as an increasing union $\cup _{i=1}^{\infty}\widetilde{B}_{i}$ of combinatorial balls. The covering map is then the union
$\cup_{i=1}^{\infty} f_{i} : \cup_{i=1}^{\infty} \widetilde{B}_{i} \rightarrow X$
where $f_{i} : \widetilde{B}_{i} \rightarrow X$ is locally injective and $f_{i}| _{\widetilde{B}_{j}} = f_{j}$, for $j \leq i$.

The proof is by induction. We choose a vertex $O$ of $X$ and we define $\widetilde{B}_{0} = \{O\}, \widetilde{B}_{1} = B_{1}(O,X)$, and $f_{1} = Id _{B_{1}(O)}$. We assume that we have constructed
the balls $\widetilde{B}_{1}, \widetilde{B}_{2}, ..., \widetilde{B}_{i}$ and the corresponding maps $f_{1}, f_{2}, ..., f_{i}$ to $X$ such that the following conditions hold:
\begin{enumerate}
\item ($P_{i}$): $\widetilde{B}_{j} = B_{j}(O, \widetilde{B}_{i}), j \in \{1, ..., i\}$;
\item ($Q_{i}$): $\widetilde{B}_{i}$ satisfies the property $SD'_{i-1}(O)$;
\item ($R_{i}$): $f_{i}|_{B_{1}(\widetilde{w},\widetilde{B}_{i})} : B_{1}(\widetilde{w},\widetilde{B}_{i}) \rightarrow B_{1}(f_{i}(\widetilde{w}),X)$ is
an isomorphism onto the span of the image for
$\widetilde{w} \in \widetilde{B}_{i}$, and it is an isomorphism for $\widetilde{w} \in \widetilde{B}_{i-1}$.
\end{enumerate}

Note that ($P_{1}$), ($Q_{1}$) and ($R_{1}$) hold, i.e. that the above conditions are satisfied for $\widetilde{B}_{1}$ and $f_{1}$. We construct further $\widetilde{B}_{i+1}$ and the map
$f_{i+1} : \widetilde{B}_{i+1} \rightarrow X$. For a simplex $\widetilde{\sigma}$ of $\widetilde{B}_{i}$, we denote by $\sigma$ its image $f_{i}(\widetilde{\sigma})$ in $X$. Let
$\widetilde{S}_{i} = S_{i}(v, \widetilde{B}_{i})$ and let \begin{center}
$Z = \{ (\widetilde{w},z) \in \widetilde{S}^{(0)}_{i} \times X^{(0)} | z \in X_{w} \setminus f_{i}((\widetilde{B}_{i})_{\widetilde{w}}) \}$.
\end{center} We define a relation $\stackrel{e}{\sim}$ on $Z$ as follows: \begin{center} $(\widetilde{w}, z) \stackrel{e}{\sim} (\widetilde{w}', z')$ iff $(z = z'$ and
$\langle \widetilde{w}, \widetilde{w}'\rangle \in \widetilde{B}_{i}^{(1)})$ . \end{center} In order to define $\widetilde{B}_{i+1}$, we shall use the transitive closure $\stackrel{\overline{e}}{\sim}$
of the relation $\stackrel{e}{\sim}$. The rest of the proof relies on the following lemma.

\begin{lemma}\label{4.2}
If $(\widetilde{w}_{1}, z) \stackrel{e}{\sim} (\widetilde{w}_{2}, z) \stackrel{e}{\sim} (\widetilde{w}_{3}, z) \stackrel{e}{\sim} (\widetilde{w}_{4}, z)$ then there is
$(\widetilde{x}, z) \in Z$ such that $(\widetilde{w}_{1}, z) \stackrel{e}{\sim} (\widetilde{x}, z) \stackrel{e}{\sim} (\widetilde{w}_{4}, z)$.

\end{lemma}

\begin{proof}
By ($P_{i}$) and ($Q_{i}$), in $\widetilde{B}_{i-1}$ there are vertices $\widetilde{u}_{j}$ such that
$\langle  \widetilde{w}_{j}, \widetilde{u}_{j}, \widetilde{w}_{j+1} \rangle \in \widetilde{B}_{i-1}$, $1 \leq j \leq 3$ and there are vertices $\widetilde{u}_{j}'$ such that
$\widetilde{u}_{j}' \sim \widetilde{u}_{j}, \widetilde{w}_{j+1}, \widetilde{u}_{j+1}, 1 \leq j \leq 2$ (possibly with $\widetilde{u}_{j}' = \widetilde{u}_{j+1}, 1 \leq j \leq 2$).
Let $u_{j} = f_{i-1}(\widetilde{u}_{j}), 1 \leq j \leq 3$ and let $u'_{j} = f_{i-1}(\widetilde{u'}_{j}), 1 \leq j \leq 2$ be vertices of $X$. The ($R_{i}$) condition also implies that $u_{j} \sim w_{j}, w_{j+1}, 1 \leq j \leq 3$. Also, by the ($R_{i}$) condition, we have $u'_{j} \sim u_{j}, w_{j+1}, u_{j+1}$ (possibly with
 $u'_{j} = u_{j+1}), 1 \leq j \leq 2$.

We explain in detail why the graph induced by $w_{1}, w_{2}, w_{3}, w_{4}, u_{1}, u_{1}', u_{2}, u_{2}', u_{3}$ in $X$ is isomorphic to the graph induced by $\widetilde{w}_{1}, \widetilde{w}_{2}, \widetilde{w}_{3}, \widetilde{w}_{4}, \widetilde{u}_{1}, \widetilde{u}_{1}', \widetilde{u}_{2}, \widetilde{u}_{2}', \widetilde{u}_{3}$ in $\widetilde{B}_{i}$.
Because $\widetilde{u}_{1} \in \widetilde{B}_{i-1}$, by the ($R_{i}$) condition $f_{i}|_{B_{1}(\widetilde{u}_{1}, \widetilde{B}_{i})}$ is an isomorphism. Then since in $\widetilde{B}_{i}$ we have $\widetilde{u}_{1} \sim \widetilde{w}_{1}, \widetilde{w}_{2}, \widetilde{u}_{1}'$, in $X$ there are vertices $u_{1}, w_{1},w_{2},u_{1}'$ such that $f_{i}(\widetilde{u}_{1}) = u_{1}$, $f_{i}(\widetilde{w}_{1}) = w_{1}$, $f_{i}(\widetilde{w}_{2}) = w_{2}$, $f_{i}(\widetilde{u}_{1}') = u_{1}'$, $u_{1} \sim w_{1}, w_{2}, u_{1}'$.
Because $\widetilde{u}_{1}' \in \widetilde{B}_{i-1}$, by the ($R_{i}$) condition $f_{i}|_{B_{1}(\widetilde{u}_{1}', \widetilde{B}_{i})}$ is an isomorphism. Then since in $\widetilde{B}_{i}$ we have $\widetilde{u}_{1}' \sim \widetilde{u}_{2}, \widetilde{w}_{2}$, in $X$ there is a vertex $u_{2}$ such that $f_{i}(\widetilde{u}_{2}) = u_{2}$, $u_{1}' \sim u_{2}, w_{2}$.
Because $\widetilde{u}_{2} \in \widetilde{B}_{i-1}$, by the ($R_{i}$) condition $f_{i}|_{B_{1}(\widetilde{u}_{2}, \widetilde{B}_{i})}$ is an isomorphism. Then since in $\widetilde{B}_{i}$ we have $\widetilde{u}_{2} \sim \widetilde{w}_{2}, \widetilde{w}_{3}, \widetilde{u}_{2}'$, in $X$ there are vertices $w_{3}$, $u_{2}'$ such that $f_{i}(\widetilde{w}_{3}) = w_{3}$, $f_{i}(\widetilde{u}_{2}') = u_{2}'$, $u_{2} \sim w_{2}, w_{3}, u_{2}'$.
Because $\widetilde{u}_{2}' \in \widetilde{B}_{i-1}$, by the ($R_{i}$) condition $f_{i}|_{B_{1}(\widetilde{u}_{2}', \widetilde{B}_{i})}$ is an isomorphism. Then since in $\widetilde{B}_{i}$ we have $\widetilde{u}_{2}' \sim \widetilde{u}_{3}, \widetilde{w}_{3}$, there is a vertex $u_{3}$ in $X$ such that $f_{i}(\widetilde{u}_{3}) = u_{3}$, $u_{2}' \sim u_{3}, w_{3}$.
Because $\widetilde{u}_{3} \in \widetilde{B}_{i-1}$, by the ($R_{i}$) condition $f_{i}|_{B_{1}(\widetilde{u}_{3}, \widetilde{B}_{i})}$ is an isomorphism. Then since in $\widetilde{B}_{i}$ we have $\widetilde{u}_{3} \sim \widetilde{w}_{3}, \widetilde{w}_{4}$, in $X$ there exists a vertex $w_{4}$ such that $f_{i}(\widetilde{w}_{4}) = w_{4}$, $u_{3} \sim w_{3}, w_{4}$.

We treat first some particular cases ($a.$ - $c.$).

$a.$ We consider the case when $w_{1} = w_{4}$. Because $w_{1} \sim w_{2} \sim w_{3} \sim w_{4}$, we have $w_{1}, w_{3} \in X_{w_{2}}$. Thus $w_{1} \sim w_{3}$.
Further, since $\widetilde{w}_{1} \sim \widetilde{w}_{2} \sim \widetilde{w}_{3}$, the ($R_{i}$) condition implies that
$\widetilde{w}_{1} \sim \widetilde{w}_{3}$. By construction we have $w_{3} \sim w_{4}$. Then, by the ($R_{i}$) condition, it follows that $\widetilde{w}_{3} \sim \widetilde{w}_{4}$.
Thus, since $\widetilde{w}_{1}$ and $\widetilde{w}_{4}$ are both adjacent to $\widetilde{w}_{3}$ in $\widetilde{B}_{i}$, the lemma holds in this case trivially.

\begin{figure}[h]
    \begin{center}
        \includegraphics[height=2cm]{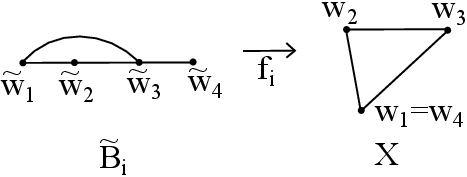}
 \caption{$w_{1} = w_{4}$}
    \end{center}
\end{figure}

$b.$ We treat the case when $w_{1} \sim w_{4}$, $\widetilde{u}_{1} = \widetilde{u}_{3}$. Since
$\widetilde{w}_{1}, \widetilde{w}_{4} \in (\widetilde{B}_{i})_{\widetilde{u}_{1}}$ and $w_{1} \sim w_{4}$, the ($R_{i}$) condition implies that $\widetilde{w}_{1} \sim \widetilde{w}_{4}$.
The lemma holds in this case trivially.

\begin{figure}[ht]
    \begin{center}
        \includegraphics[height=2cm]{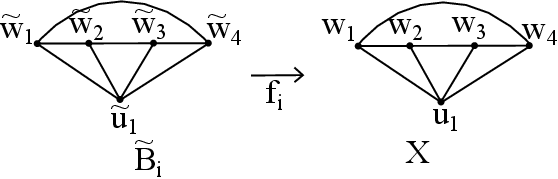}
 \caption{$w_{1} \sim w_{4}$, $\widetilde{u}_{1} = \widetilde{u}_{3}$}
    \end{center}
\end{figure}

$c.$ We consider the case when $w_{1} \sim w_{4}$, $\widetilde{u}_{1} \neq \widetilde{u}_{3}$.
Let $\widetilde{\gamma}$ be a tightening of the path $(\widetilde{w}_{1}, \widetilde{u}_{1}, \widetilde{u}_{1}', \widetilde{u}_{2}, \widetilde{u}_{2}', \widetilde{u}_{3}, \widetilde{w}_{4})$ in $\widetilde{B}_{i}$. So $\widetilde{\gamma}$ is full in $\widetilde{B}_{i}$.
We will show that the path $f_{i}(\widetilde{\gamma})$ is full in $X$.
Assume by contradiction that $f_{i}(\widetilde{\gamma})$ is not full in $X$. So there are two adjacent vertices, say $s$ and $t$, such that $\langle s,t \rangle \not \subset f_{i}(\widetilde{\gamma})$.  Note that $s,t \in \{w_{1}, u_{1}, u_{1}', u_{2}, u_{2}', u_{3}, w_{4}\}$. There are two cases to be analyzed which are treated below.

$\bullet$ Assume that there exists a vertex $v \in f_{i}(\widetilde{\gamma})$ such that $s \sim v \sim t$. Note that $v \in \{u_{1}, u_{1}', u_{2}, u_{2}', u_{3}\}$. By the ($R_{i}$) condition, there are vertices $\widetilde{s}, \widetilde{t}, \widetilde{v} \in \widetilde{B}_{i}$ such that $f_{i}(\widetilde{s}) = s$, $f_{i}(\widetilde{t}) = t$, $f_{i}(\widetilde{v}) = v$ and $\widetilde{s} \sim \widetilde{v} \sim \widetilde{t}$. Note that $\widetilde{s},\widetilde{t} \in \{\widetilde{w}_{1}, \widetilde{u}_{1}, \widetilde{u}_{1}', \widetilde{u}_{2}, \widetilde{u}_{2}', \widetilde{u}_{3}, \widetilde{w}_{4}\}$ and $\widetilde{v} \in \{\widetilde{u}_{1}, \widetilde{u}_{1}', \widetilde{u}_{2}, \widetilde{u}_{2}', \widetilde{u}_{3}\}$.
Because $s,t \in X_{v}$, the ($R_{i}$) condition implies that $\widetilde{s}, \widetilde{t} \in (\widetilde{B_{i}})_{\widetilde{v}}$. Hence $\widetilde{s} \sim \widetilde{t}$. So $\widetilde{s}$ and $\widetilde{t}$ are nonconsecutive vertices of $\widetilde{\gamma}$ which are adjacent. This implies that $\widetilde{\gamma}$ is not full in $\widetilde{B}_{i}$. Because $\widetilde{\gamma}$ is full in $\widetilde{B}_{i}$, we have reached a contradiction. The path $f_{i}(\widetilde{\gamma})$ is therefore full in $X$.
So the loop $\gamma = f_{i}(\widetilde{\gamma}) \cup \langle w_{4}, w_{1} \rangle$ is full in $X$.

 $\bullet$ Assume that there exists a sequence of vertices $(v_{j})_{1 \leq j \leq k} \in f_{i}(\widetilde{\gamma})$ such that $s \sim v_{1} \sim ... \sim v_{k} \sim t$, $2 \leq k \leq 5$.
 Note that $v_{j} \in \{u_{1}, u_{1}', u_{2}, u_{2}', u_{3}\}, 1 \leq j \leq k$. By the ($R_{i}$) condition, there are vertices $\widetilde{s}, \widetilde{t}$ and $\widetilde{v}_{j}$ in $\widetilde{B}_{i}$ such that $f_{i}(\widetilde{s}) = s$, $f_{i}(\widetilde{t}) = t$, $f_{i}(\widetilde{v_{j}}) = v_{j}$, $1 \leq j \leq k$ and such that $\widetilde{s} \sim \widetilde{v}_{1} \sim ... \sim \widetilde{v}_{k} \sim \widetilde{t}$. Note that $\widetilde{s},\widetilde{t} \in \{\widetilde{w}_{1}, \widetilde{u}_{1}, \widetilde{u}_{1}', \widetilde{u}_{2}, \widetilde{u}_{2}', \widetilde{u}_{3}, \widetilde{w}_{4}\}$ and $\widetilde{v_{j}} \in \{\widetilde{u}_{1}, \widetilde{u}_{1}', \widetilde{u}_{2}, \widetilde{u}_{2}', \widetilde{u}_{3}\}, 1 \leq j \leq k$.
Since the vertices $s$ and $t$ are adjacent, we have $s,v_{k} \in X_{t}$. Then the ($R_{i}$) condition implies that $\widetilde{s},\widetilde{v}_{k} \in (\widetilde{B_{i}})_{\widetilde{t}}$. Thus $\widetilde{s} \sim \widetilde{v}_{k}$. Because $\widetilde{s}$ and $\widetilde{v}_{k}$ are nonconsecutive vertices of $\widetilde{\gamma}$ which are adjacent, $\widetilde{\gamma}$ is not full in $\widetilde{B}_{i}$. Because $\widetilde{\gamma}$ is full in $\widetilde{B}_{i}$, we have reached a contradiction. The path $f_{i}(\widetilde{\gamma})$ is therefore full in $X$.
So the loop $\gamma = f_{i}(\widetilde{\gamma}) \cup \langle w_{4}, w_{1} \rangle$ is full in $X$.

Since $\widetilde{B}_{i}$ is simply connected, the cycle $\widetilde{\gamma} \cup \langle \widetilde{w}_{4}, \widetilde{w}_{3} \rangle \cup \langle \widetilde{w}_{3}, \widetilde{w}_{2} \rangle \cup \langle \widetilde{w}_{2}, \widetilde{w}_{1} \rangle$ in $\widetilde{B}_{i}$ is contractible and it has therefore a filling.
The loop $f_{i}(\widetilde{\gamma}) \cup \langle w_{4}, w_{3} \rangle \cup \langle w_{3}, w_{2} \rangle \cup \langle w_{2}, w_{1} \rangle$ in $X$ is the image through the simplicial map $f_{i}$ of the loop $\widetilde{\gamma} \cup \langle \widetilde{w}_{4}, \widetilde{w}_{3} \rangle \cup \langle \widetilde{w}_{3}, \widetilde{w}_{2} \rangle \cup \langle \widetilde{w}_{2}, \widetilde{w}_{1} \rangle$ in $\widetilde{B}_{i}$.  So the loop $f_{i}(\widetilde{\gamma}) \cup \langle w_{4}, w_{3} \rangle \cup \langle w_{3}, w_{2} \rangle \cup \langle w_{2}, w_{1} \rangle$ also has a filling. Because $w_{1} \sim w_{4}$ the loop $(w_{1}, w_{2}, w_{3}, w_{4})$ in $X$ is homotopically trivial and hence it has a filling.
Thus there is a filling of $\gamma = f_{i}(\widetilde{\gamma}) \cup \langle w_{4}, w_{1} \rangle$ which is the union of the fillings of the loops $f_{i}(\widetilde{\gamma}) \cup \langle w_{4}, w_{3} \rangle \cup \langle w_{3}, w_{2} \rangle \cup \langle w_{2}, w_{1} \rangle$ and $(w_{1}, w_{2}, w_{3}, w_{4})$.
Hence the loop $\gamma$ in $X$ is homotopically trivial.
Because $\gamma$ is full and its length is at most $7$, by $8$-location, there exists a vertex $y$ in $X$ which is adjacent to all vertices of $\gamma$. Thus $\langle  y,u_{1} \rangle$ and $\langle  y,u_{3} \rangle$ are edges of $X$. By the ($R_{i}$) condition applied to the vertices $\widetilde{u}_{1}$ and $\widetilde{u}_{3}$, there is a vertex $\widetilde{y}$ in $\widetilde{B}_{i}$ such that
$f_{i}(\widetilde{y}) = y$ while
$\langle  \widetilde{y}, \widetilde{u}_{1} \rangle$ and $\langle  \widetilde{y}, \widetilde{u}_{3} \rangle$ are edges of $\widetilde{B}_{i}$.
Then, because $y \sim w_{4}$, the ($R_{i}$) condition implies that $\widetilde{y} \sim \widetilde{w}_{4}$.
 Similarly we get $\widetilde{y} \sim \widetilde{w}_{1}$.
Because $\widetilde{w}_{1}$ and $\widetilde{w}_{4}$
both belong to the link of $\widetilde{y}$ in $\widetilde{B}_{i}$ and $w_{1} \sim w_{4}$, the ($R_{i}$) condition implies that $\widetilde{w}_{1} \sim \widetilde{w}_{4}$. So the lemma holds in the case $w_{1} \sim w_{4}$, $\widetilde{u}_{1} \neq \widetilde{u}_{3}$ trivially.

\begin{figure}[ht]
    \begin{center}
        \includegraphics[height=4cm]{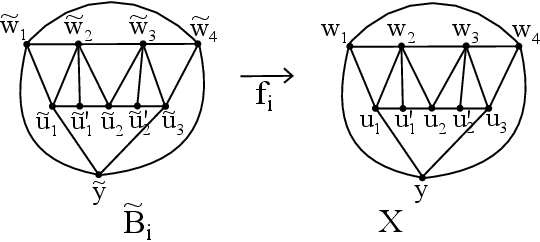}
 \caption{$w_{1} \sim w_{4}$, $d_{c}(\widetilde{u}_{1}, \widetilde{u}_{2}) = d_{c}(\widetilde{u}_{2}, \widetilde{u}_{3}) = 2$}
    \end{center}
\end{figure}

\begin{figure}[h]
    \begin{center}
        \includegraphics[height=3cm]{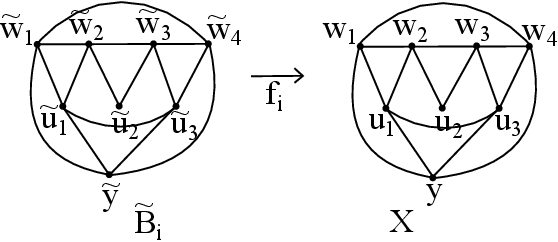}
 \caption{$w_{1} \sim w_{4}$, $\widetilde{u}_{1} \sim \widetilde{u}_{3}$}
    \end{center}
\end{figure}

We show further that $z \neq u_{1}$. Suppose by contradiction that $z = u_{1}$. By the definition of the set $Z$, we have $z \in X_{w_{1}}$.
Since $z = u_{1}$ and $u_{1} \sim w_{1}$, by the ($R_{i}$) condition applied to the vertex $\widetilde{w}_{1}$, there is a vertex $\widetilde{z}$ in $\widetilde{B}_{i-1}$
such that $f_{i}(\widetilde{z}) = z$ and $\widetilde{z} \sim \widetilde{w}_{1}$. So $\langle  \widetilde{z}, \widetilde{w}_{1} \rangle \in \widetilde{B}_{i}$.
But since $(\widetilde{w}_{1}, z) \in Z$, it follows that $\langle  \widetilde{z}, \widetilde{w}_{1} \rangle \notin \widetilde{B}_{i}$. Because we have reached a contradiction, $z \neq u_{1}$. One can show similarly that $z \neq u_{i}, i \in \{2,3\}$ and $z \neq u'_{j}, j \in \{1,2\}$.

We prove next that $z \nsim u_{1}$. Suppose by contradiction that $z \sim u_{1}$. By the definition of the set $Z$, we have $z \in X_{w_{1}}$. By the ($R_{i}$) condition applied to the vertex $\widetilde{u}_{1}$, there is a vertex $\widetilde{z}$ in $\widetilde{B}_{i-1}$
 such that $f_{i}(\widetilde{z}) = z$ and $\widetilde{z} \sim \widetilde{u}_{1}$.
Because $\widetilde{w}_{1}$ and $\widetilde{z}$ both belong to the link of $\widetilde{u}_{1}$ in $\widetilde{B}_{i}$, we have
$\langle  \widetilde{z}, \widetilde{w}_{1} \rangle \in \widetilde{B}_{i}$.
But since $(\widetilde{w}_{1}, z) \in Z$, it follows that $\langle  \widetilde{z}, \widetilde{w}_{1} \rangle \notin \widetilde{B}_{i}$.
This yields a contradiction and hence $z \nsim u_{1}$. One can show similarly that $z \nsim u_{i}, i \in \{2,3\}$ and $z \nsim u'_{j}, j \in \{1,2\}$.

From now on assume that $w_{1} \neq w_{4}$, $w_{1} \nsim w_{4}$, $z \neq u_{i}, z \nsim u_{i}, 1 \leq i \leq 3$,
$z \neq u_{j}', z \nsim u_{j}', j \in \{1,2\}$.

Let $\widetilde{\gamma}$ be a tightening of the path $(\widetilde{w}_{1}, \widetilde{u}_{1}, \widetilde{u}'_{1}, \widetilde{u}_{2}, \widetilde{u}'_{2}, \widetilde{u}_{3}, \widetilde{w}_{4})$ in $\widetilde{B_{i}}$. Since the path $\widetilde{\gamma}$ is full in $\widetilde{B_{i}}$,
the ($R_{i}$) condition implies that the path $f_{i}(\widetilde{\gamma})$ is full in $X$.
Moreover, by the definition of the set $Z$, the loop $\gamma = f_{i}(\widetilde{\gamma}) \cup \langle w_{4}, z \rangle \cup \langle z, w_{1} \rangle$ in $X$ is
homotopically trivial (if $\widetilde{u}_{1} \sim \widetilde{u}_{3}$ the claim requires additional explanations given in the paragraph below; assuming the claim, we continue).
Because $\gamma$ is full and its length is at most $8$, by $8$-location, $\gamma$ is contained
in the link of a vertex $x$.
By the ($R_{i}$) condition applied to the vertex $\widetilde{u}_{1}$, there exists a vertex $\widetilde{x}$ in $\widetilde{B}_{i}$ such that
$f_{i}(\widetilde{x}) = x$ and
$\widetilde{x} \sim \widetilde{u}_{1}$.
Moreover, since $w_{1} \sim x$ and $w_{4} \sim x$, the ($R_{i}$) condition implies that the vertices
$\widetilde{w}_{4}$ and $\widetilde{w}_{1}$ are both adjacent to
$\widetilde{x}$.
Hence, once we show that $(\widetilde{x}, z) \in Z$, the lemma is proven.
Assume by contradiction that $(\widetilde{x}, z) \notin Z$. By the ($R_{i}$) condition, there exists a vertex $\widetilde{z} \in \widetilde{B_{i}}$ such that $f_{i}(\widetilde{z}) = z$ and
$\langle \widetilde{z}, \widetilde{x} \rangle \in \widetilde{B_{i}}$. Because $\widetilde{w}_{1}$ and $\widetilde{z}$ both belong to the link of $\widetilde{x}$ in
$\widetilde{B}_{i}$, this implies that $\langle \widetilde{z}, \widetilde{w}_{1} \rangle \in \widetilde{B}_{i}$. But, since $(\widetilde{w}_{1}, z) \in Z$, we have
$\langle \widetilde{z}, \widetilde{w}_{1}\rangle \notin \widetilde{B_{i}}$. Because we have reached a contradiction, it follows that $(\widetilde{x},z) \in Z$.

For the case $\widetilde{u}_{1} \sim \widetilde{u}_{3}$ we explain further why the loop $\gamma = (z, w_{1}, u_{1}, u_{3}, w_{4})$ in $X$ is homotopically trivial.
Note that the filling of $\gamma$ is the union of the fillings of the loops $(z, w_{1}, w_{2}, w_{3}, w_{4})$ (which is homotopically trivial) and $(w_{1}, w_{2}, w_{3}, w_{4}, u_{3}, u_{1})$.
The loop $(w_{1}, w_{2}, w_{3}, w_{4}, u_{3}, u_{1})$ is the image through the simplicial map $f_{i}$ of the loop $(\widetilde{w}_{1}, \widetilde{w}_{2}, \widetilde{w}_{3}, \widetilde{w}_{4}, \widetilde{u}_{3}, \widetilde{u}_{1})$ in $\widetilde{B}_{i}$. Because $\widetilde{B}_{i}$ is simply connected, the loop $(\widetilde{w}_{1}, \widetilde{w}_{2}, \widetilde{w}_{3}, \widetilde{w}_{4}, \widetilde{u}_{3}, \widetilde{u}_{1})$
is contractible and it has therefore a filling.
So the loop $(w_{1}, w_{2}, w_{3}, w_{4}, u_{3}, u_{1})$ in $X$ also has a filling. Hence this loop is homotopically trivial. So $\gamma$ is homotopically trivial.

\begin{figure}[ht]
    \begin{center}
        \includegraphics[height=3cm]{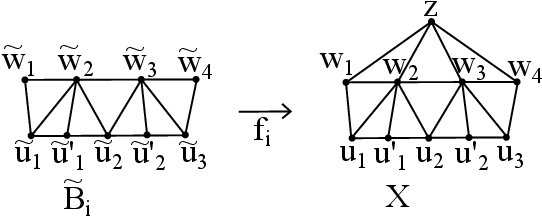}
 \caption{$d_{c}(\widetilde{u}_{1}, \widetilde{u}_{2}) = d_{c}(\widetilde{u}_{2}, \widetilde{u}_{3}) = 2$}
    \end{center}
\end{figure}

\begin{figure}[ht]
    \begin{center}
        \includegraphics[height=3cm]{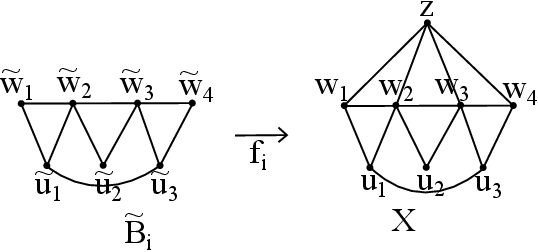}
 \caption{$\widetilde{u}_{1} \sim \widetilde{u}_{3}$}
    \end{center}
\end{figure}

\begin{figure}[ht]
    \begin{center}
        \includegraphics[height=3cm]{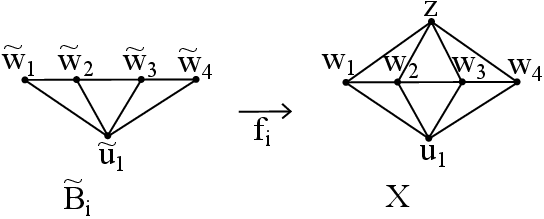}
 \caption{$\widetilde{u}_{1} = \widetilde{u}_{3}$}
    \end{center}
\end{figure}

\end{proof}

According to the previous lemma, if $(\widetilde{u},z) \stackrel{\overline{e}}{\sim} (\widetilde{w},z)$, then there is a vertex $\widetilde{x} \in \widetilde{S}_{i}$ such that
$(\widetilde{x},z) \in Z$ and $\langle \widetilde{x},\widetilde{u}\rangle, \langle \widetilde{x},\widetilde{w}\rangle \in \widetilde{B}_{i}$.

We define the flag simplicial complex $\widetilde{B}_{i+1}$. Its $0$-skeleton is defined as the set
$\widetilde{B}_{i+1}^{(0)} = \widetilde{B}_{i}^{(0)} \cup (Z/_{\stackrel{\overline{e}}{\sim}})$.
Further we define the $1$-skeleton $\widetilde{B}_{i+1}^{(1)}$ of $\widetilde{B}_{i+1}$. Edges between vertices of $\widetilde{B}_{i}$ are the same as in $\widetilde{B}_{i}$.
For every $\widetilde{w} \in \widetilde{S}_{i}^{(0)}$, there are edges joining $\widetilde{w}$ with $[\widetilde{w},z] \in Z/_{\stackrel{\overline{e}}{\sim}}$
(here $[\widetilde{w},z]$ denotes the equivalence class of $(\widetilde{w},z)\in Z$) and there are edges joining $[  \widetilde{w},z ]  $ with $[  \widetilde{w},z' ]  $, for $\langle   z,z' \rangle \in X$.
Once we have defined the $1$-skeleton of $\widetilde{B}_{i+1}$, the higher dimensional skeleta are determined by the flagness property.

The map $f_{i+1} : \widetilde{B}_{i+1}^{(0)} \rightarrow X$ is defined by $f_{i+1}|_{\widetilde{B}_{i}} = f_{i}$ and $f_{i+1}([  \widetilde{w},z ]  ) = z$. We show that this map can be extended simplicially. The proof is analogous to the one given in \cite{O-8loc} (Lemma $3.5$). It is enough to do it for simplices in $\widetilde{B}_{i+1} \setminus \widetilde{B}_{i-1}$. Let $\widetilde{\sigma} = <[\widetilde{w}_{1},z_{1}], ..., [\widetilde{w}_{l},z_{l}],\widetilde{w}_{1}', ..., \widetilde{w}_{m}'> \in \widetilde{B}_{i+1}$ be a simplex. Then, by the definition of edges in $\widetilde{B}_{i+1}$, we have that $<z_{p},z_{q}> \in X$ and $<z_{r},w_{s}'> \in X,$ for $p,q,r \in \{ 1,2, ..., l \}$ and $s \in \{ 1,2, ..., m \}$. Since $f_{i+1}([\widetilde{w}_{p},z_{p}]) = z_{p}$, $f_{i+1}(\widetilde{w}_{s}') = w_{s}'$ and since the map $f_{i}$ is simplicial, it follows that $<\{ f_{i+1}(\widetilde{w}) | \widetilde{w} \in \widetilde{\sigma} \}> \in X$. Hence, by the simplicial extension, we can define the map $f_{i+1} : \widetilde{B}_{i+1} \rightarrow X$.

We verify next whether $\widetilde{B}_{i+1}$ and the map $f_{i+1}$ satisfy the conditions ($P_{i+1}$), ($Q_{i+1}$) and ($R_{i+1}$). The proofs of the conditions ($P_{i+1}$) and ($R_{i+1}$) are similar to the ones given in \cite{O-sdn} (Theorem $4.5$). The proof of the condition ($Q_{i+1}$) is similar to the one given in \cite{O-8loc} (Lemma $3.5$).

Condition ($P_{i+1}$). Because for every $[\widetilde{w},z] \in \widetilde{B}_{i+1}$ we have $d(v,[\widetilde{w},z]) = i+1$, it is clear that $\widetilde{B}_{j} = B_{j}(v,\widetilde{B}_{i+1})$, for $3 \leq j \leq i+1$. Thus ($P_{i+1}$) holds.

Condition ($Q_{i+1}$). According to the condition ($Q_{i}$), it suffices to check the triangle condition (T) and the vertex condition (V) from Definition $2.3$ only for, respectively, edges and vertices in $\widetilde{S}_{i+1}$. The definition of edges in $\widetilde{S}_{i+1}$ implies that $(\widetilde{B}_{i+1})_{e} \cap \widetilde{B}_{i}$ is non-empty for an edge $e \in \widetilde{S}_{i+1}$. The triangle condition (T) is hence fulfilled. The definition of edges in $\widetilde{B}_{i+1}$ and Lemma $4.2$ imply the vertex condition (V) for a vertex $[\widetilde{w},z] \in \widetilde{S}_{i+1}$.

Condition ($R_{i+1}$). Note that since ($R_{i}$) holds in $B_{i}$, it is enough to consider only vertices $\widetilde{w} \in \widetilde{B}_{i+1} \setminus \widetilde{B}_{i-1}$. There are two cases to consider.

(Case $1$. $\widetilde{w} \in \widetilde{S}_{i}$.)  First we prove injectivity of the map $f_{i+1}|_{B_{1}(\widetilde{w},\widetilde{B}_{i+1})}$. Let $\widetilde{x} \neq \widetilde{x}' \in (\widetilde{B}_{i+1})_{\widetilde{w}}$ be two vertices. If $\widetilde{x}, \widetilde{x}' \in \widetilde{S}_{i}$, then $f_{i+1}(\widetilde{x}) \neq f_{i+1}(\widetilde{x}') \neq f_{i+1}(\widetilde{w}) \neq f_{i+1}(\widetilde{x})$ by local injectivity of $f_{i}$. If $\widetilde{x} \in \widetilde{S}_{i}$ and $\widetilde{x}' = [\widetilde{w},z]$ then $f_{i+1}(\widetilde{x}) \neq f_{i+1}(\widetilde{x}') = z \neq f_{i+1}(\widetilde{w}) \neq f_{i+1}(\widetilde{x})$, by local injectivity of $f_{i}$ and by the definition of the set $Z$ containing $z$. If $\widetilde{x} = [\widetilde{w},z]$ and $\widetilde{x}' = [\widetilde{w},z']$ then $f_{i+1}(\widetilde{x}) = z \neq f_{i+1}(\widetilde{x}') = z' \neq f_{i+1}(\widetilde{w}) \neq f_{i+1}(\widetilde{x}) = z$ by the definition of the set $Z$ and by the fact that $z \neq z'$.

Surjectivity of the map $f_{i+1}|_{B_{1}(\widetilde{w},\widetilde{B}_{i+1})}$ follows from the fact that, for $z \in X_{w} \setminus f_{i}(B_{1}(\widetilde{w},\widetilde{B}_{i}))$, we have $f_{i+1}([\widetilde{w},z]) = z$.

(Case $2$. $\widetilde{w} = [\widetilde{y},z]$.) Here we prove only injectivity of the map $f_{i+1}|_{B_{1}(\widetilde{w},\widetilde{B}_{i+1})}$. Let $\widetilde{x} \neq \widetilde{x}' \in (\widetilde{B}_{i+1})_{\widetilde{w}}$ be two vertices. If $\widetilde{x}, \widetilde{x}' \in \widetilde{S}_{i}$, then $f_{i+1}(\widetilde{x}) \neq f_{i+1}(\widetilde{x}') \neq f_{i+1}(\widetilde{w}) = z \neq f_{i+1}(\widetilde{x}) $, by local injectivity of $f_{i}$ and by the definition of the set $Z$. If $\widetilde{x} \in \widetilde{S}_{i}$ and $\widetilde{x}' = [\widetilde{y}',z']$ then $f_{i+1}(\widetilde{x}) \neq f_{i+1}(\widetilde{x}') = z' \neq f_{i+1}(\widetilde{w}) = z \neq f_{i+1}(\widetilde{x})$, by the definition of the set $Z$ and since $z \neq z'$. If $\widetilde{x} = [\widetilde{y}',z']$ and $\widetilde{x}' = [\widetilde{y}'',z'']$ then $d(\widetilde{y}',\widetilde{y}'') \leq 1$ and thus $z' \neq z''$. Because $z' \neq z \neq z''$ we have $f_{i+1}(\widetilde{x}) = z' \neq f_{i+1}(\widetilde{x}') = z'' \neq f_{i+1}(\widetilde{w}) = z \neq f_{i+1}(\widetilde{x}) = z'$.

So we have built inductively a complex $\widetilde{X} = \cup_{i=1}^{\infty}\widetilde{B}_{i}$ which satisfies the $SD'_{n}(O)$ property for each natural $n$. Inductively we have also
constructed a map
$f = \cup_{i=1}^{\infty}f_{i} : \widetilde{X} \rightarrow X$ which is a covering map. Because
$\widetilde{X}$ was built such that it satisfies, for each $n$, the $SD'_{n}(O)$ property, it is, by Proposition \ref{2.1}, simply connected. So $\widetilde{X}$ is the
universal cover of $X$. Because the universal cover of $X$ is unique and the vertex $O$ is arbitrary, $\widetilde{X}$
satisfies the $SD'_{n}(O)$ property for each $O$ and for each $n$.
Hence we have constructed the universal cover of $X$ which satisfies the $SD'$ property.
\end{proof}

Theorem \ref{3.7} and Theorem \ref{4.1} imply the paper's main result.

\begin{theorem}\label{4.2}
Let $X$ be a simply connected, $8$-located simplicial complex. Then the $0$-skeleton of $X$ with a path metric induced from $X^{(1)}$, is $\delta$-hyperbolic, for a
universal constant $\delta$.
\end{theorem}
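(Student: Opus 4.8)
The plan is to deduce Theorem \ref{4.2} directly from the two structural results established above, Theorem \ref{3.2} and Theorem \ref{4.1}, so that no new combinatorial analysis is required. The key observation is that a simply connected simplicial complex is canonically isomorphic to its own universal cover; once this is in place, the hypotheses of Theorem \ref{3.2} become available for $X$ itself.

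Concretely, I would proceed in three steps. First, since $X$ is $8$-located, Theorem \ref{4.1} applies and produces a universal cover $\widetilde{X}$ of $X$ which satisfies the property $SD'$. Second, because $X$ is assumed simply connected, the covering map $\widetilde{X} \to X$ is an isomorphism: a connected, simply connected complex is its own universal cover, and the universal cover is unique up to isomorphism. Consequently $X$ itself satisfies the property $SD'$. Third, $X$ now meets both hypotheses of Theorem \ref{3.2} --- it is $8$-located and it satisfies $SD'$ --- so that theorem yields that the $0$-skeleton of $X$, equipped with the path metric induced from $X^{(1)}$, is $\delta$-hyperbolic; moreover the constant produced there is universal, since the argument of Theorem \ref{3.2} shows that intervals are uniformly (indeed, $2$-)thin independently of the complex. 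This is precisely the assertion of Theorem \ref{4.2}.

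The only point needing a word of justification is the identification $\widetilde{X} \cong X$ under the simple connectivity hypothesis; this is standard covering-space theory in the simplicial category, using that a simplicial covering restricts to isomorphisms on $1$-balls and that a simply connected complex admits no nontrivial connected covering. I do not expect a genuine obstacle here: all the substantive work --- the local-to-global passage in Theorem \ref{4.1} and the thinness-of-intervals analysis in Theorem \ref{3.2} (which in turn rests on Lemma \ref{3.1} and the extensive case analysis of $8$-location) --- has already been carried out, so the present step is purely a formal assembly of those two inputs.
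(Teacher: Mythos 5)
Your proposal is correct and matches the paper's own (very terse) proof, which simply cites Theorems \ref{3.2} and \ref{4.1}; you have merely spelled out the implicit step that a simply connected complex is isomorphic to its universal cover, so that $X$ inherits the $SD'$ property from Theorem \ref{4.1} and Theorem \ref{3.2} then applies directly to $X$.
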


\proof[Acknowledgements]
I am indebted to Professor Damian Osajda for introducing me to the subject, posing the problem, and helpful explanations.
I thank Professor Louis Funar and Professor Dorin Andrica for introducing me to systolic geometry a few years ago. This work was partially supported by the grant $346300$ for IMPAN from
the Simons Foundation and the matching $2015-2019$ Polish MNiSW fund.


\begin{thebibliography}{10}

\bibitem{BCCGO}
B. Bre{\v{s}}ar, J. Chalopin, V. Chepoi, T. Gologranc and D. Osajda,
\emph{Bucolic complexes}
In: Adv. Math., {\bf 243}, 2013,
pp. 127--167.

\bibitem{ChaCHO}
J. Chalopin, V. Chepoi, H. Hirai and D. Osajda,
\emph{Weakly modular graphs and nonpositive curvature},
preprint, arXiv:1409.3892, 2014.

\bibitem{ChOs}
V. Chepoi and D. Osajda,
\emph{Dismantlability of weakly systolic complexes and applications},
In: Trans. Amer. Math. Soc., {\bf 367}, 2015, no. 2,
pp. 1247--1272.

\bibitem{Gom}
R. G{\'o}mez-Ortells,
\emph{Compactly supported cohomology of systolic 3-pseudomanifolds},
In: Colloq. Math., {\bf 135}, 2014, no. 1,
pp. 103--112.

\bibitem{Gro}
M. Gromov,
\emph{Hyperbolic groups},
In: Math. Sci. Res. Inst. Publ., {\bf 8}, Springer, New York, 1987,
pp. 75--263.

\bibitem{Hag}
F. Haglund,
\emph{Complexes simpliciaux hyperboliques de grande dimension},
preprint, 2003, \url{http://www.math.u-psud.fr/~haglund/cpl_hyp_gde_dim.pdf}

\bibitem{JS0}
T. Januszkiewicz and J. {\'S}wi{\c{a}}tkowski,
\emph{Hyperbolic Coxeter groups of large dimension},
In: Comment. Math. Helv., {\bf 78}, 2003, no. 3,
pp. 555--583.

\bibitem{JS1}
T. Januszkiewicz and J. {\'S}wi{\c{a}}tkowski,
\emph{Simplicial nonpositive curvature},
In: Publ. Math. Inst. Hautes \'Etudes Sci.,{ \bf 104}, 2006,
pp. 1--85.

\bibitem{JS2}
T. Januszkiewicz and J. {\'S}wi{\c{a}}tkowski,
\emph{Filling invariants of systolic complexes and groups},
In: Geom. Topol., {\bf 11}, 2007,
pp. 727--758.

\bibitem{O-ci}
D. Osajda,
\emph{Connectedness at infinity of systolic complexes and groups},
In: Groups Geom. Dyn., {\bf 1}, 2007, no. 2,
pp. 183--203.

\bibitem{O-ib}
D. Osajda,
\emph{Ideal boundary of 7-systolic complexes and groups},
In: Algebr. Geom. Topol., {\bf 8}, 2008, no. 1,
pp. 81--99.

\bibitem{O-chcg}
D. Osajda,
\emph{A construction of hyperbolic Coxeter groups},
In: Comment. Math. Helv., {\bf 88}, 2013, no. 2,
pp. 353--367.

\bibitem{O-sdn}
D. Osajda,
\emph{A combinatorial non-positive curvature I: weak systolicity},
preprint, arXiv: 1305.4661, 2013.

\bibitem{O-8loc}
D. Osajda,
\emph{Combinatorial negative curvature and triangulations of three-manifolds},
In: Indiana Univ. Math. J., {\bf 64}, 2015, no. 3,
pp. 943--956.

\bibitem{O-ns}
D. Osajda,
\emph{Normal subgroups of SimpHAtic groups},
submitted, arXiv:1501.00951, 2015.

\bibitem{OS}
D. Osajda and J. {\'S}wi{\c{a}}tkowski,
\emph{On asymptotically hereditarily aspherical groups},
In: Proc. London Math. Soc., 2015.

\bibitem{Papa}
P. Papasoglu,
\emph{Strongly geodesically automatic groups are hyperbolic},
In: Invent. Math. {bf 121}, 1995, no. 2,
pp. 323--334.

\end{thebibliography}
\end{document}